\numberwithin{equation}{section}
\numberwithin{figure}{section}
\theoremstyle{plain}
\newtheorem{thm}{\protect\theoremname}[section]
  \theoremstyle{definition}
  \theoremstyle{remark}
  \newtheorem{rem}[thm]{\protect\remarkname}
  \theoremstyle{plain}
  \newtheorem{cor}[thm]{\protect\corollaryname}
      \newtheorem*{fact*}{Fact}
  \theoremstyle{plain}
  \newtheorem{lem}[thm]{\protect\lemmaname}
  \theoremstyle{definition}
  \theoremstyle{plain}
  \newtheorem{prop}[thm]{\protect\propositionname}
  \theoremstyle{remark}
  \newtheorem*{rem*}{\protect\remarkname}
\let\reftagform@=\tagform@
\def\tagform@#1{\maketag@@@{(\ignorespaces\textcolor{purple}{#1}\unskip\@@italiccorr)}}
\renewcommand{\eqref}[1]{\textup{\reftagform@{\ref{#1}}}}
\DeclareUrlCommand\ULurl@@{%
  \def\UrlLeft{\uline\bgroup}%
  \def\UrlRight{\egroup}}
\def\ULurl@#1{\hyper@linkurl{\ULurl@@{#1}}{#1}}
\DeclareRobustCommand*\ULurl{\hyper@normalise\ULurl@}
\newcommand{\cA}{\mathcal{A}}
\newcommand{\cH}{\mathcal{H}}
\newcommand{\cM}{\mathscr{M}}
\newcommand{\cR}{\mathcal{R}}
\newcommand{\cQ}{\mathcal{Q}}
\newcommand{\cE}{\mathcal{E}}
\newcommand{\cF}{\mathcal{F}}
\newcommand{\R}{\mathbb{R}}
\newcommand{\N}{\mathbb{N}}
\newcommand{\E}{\mathbb{E}}
\newcommand{\indicator}[1]{\mathbbm{1}\left\{{#1}\right\}}
\newcommand{\eqdist}{\stackrel{(d)}{=}}
\newcommand{\tensor}{\otimes}
\newcommand{\supp}{\text{supp}}
\newcommand{\abs}[1]{\left\lvert#1\right\rvert}
\newcommand{\norm}[1]{\lvert\lvert#1\rvert\rvert}
\newcommand{\gibbs}[1]{\left\langle #1\right\rangle}
  \providecommand{\corollaryname}{Corollary}
  \providecommand{\definitionname}{Definition}
  \providecommand{\examplename}{Example}
  \providecommand{\lemmaname}{Lemma}
  \providecommand{\propositionname}{Proposition}
  \providecommand{\remarkname}{Remark}
\providecommand{\theoremname}{Theorem}
\begin{document}

\title{On spin distributions for generic $p$-spin models} 
\author{Antonio Auffinger}
\address[Antonio Auffinger]{Department of Mathematics, Northwestern University}
\email{tuca@northwestern.edu}
\author{Aukosh Jagannath}
\address[Aukosh Jagannath]{Department of Mathematics, Harvard University}
\email{aukosh@math.harvard.edu}


\date{\today}
\begin{abstract}
We provide an alternative formula for spin distributions of generic $p$-spin glass models. As a main application of this expression, we write
spin statistics as solutions of partial differential equations and we show that the generic $p$-spin models 
satisfy multiscale Thouless--Anderson--Palmer equations as originally predicted in the work
of M\'ezard--Virasoro \cite{MV85}.
\end{abstract}

\maketitle

\section{Introduction}
Let $H_N$ be the Hamiltonian for the mixed $p$-spin model on the discrete hypercube $\{+1,-1\}^N$, 
\begin{equation}\label{eq:ham-def}
H_N(\sigma) = \sum_{p\geq 2} \frac{\beta_p}{N^{(p-1)/2}} \sum_{1\leq i_1,\ldots,i_p \leq N} g_{i_1,\ldots,i_p} \sigma_{i_1}\cdots\sigma_{i_p},
\end{equation}
where $\{g_{i_1,\ldots,i_p}\}$ are i.i.d. standard Gaussian random variables. 
Observe that if we let 
\[\xi(x)=\sum_{p\in \mathbb{N}}\beta_p^{2}x^p,\] 
then the covariance of $H_N$ satisfies
	\begin{align*}
	\mathbb E H_N(\sigma^1)H_{N}(\sigma^2)=N\xi(R_{1,2}),
	\end{align*}
where $R_{\ell,\ell'}:=\frac{1}{N}\sum_{i=1}^N\sigma_i^\ell\sigma_i^{\ell'}$ is the normalized inner-product between $\sigma^{\ell}$ and $\sigma^{\ell'}$, $\ell, \ell' \geq 1$.  We let $G_{N}$ to be the Gibbs measure associated to $H_{N}$. In this note, we will be concerned with generic $p$-spin models, that is, those models for which the linear span of the set
$\{1\}\cup\{x^p:p\geq2,\, \beta_p\neq0\}$
is dense in $\left(C([-1,1]),\norm{\cdot}_\infty\right)$.

Generic $p$-spin models are central objects in the study of mean field spin glasses. They satisfy the Ghirlanda--Guerra identities \cite{PanchGhir10}. As a consequence, 
if we let $(\sigma^\ell)_{\ell \geq 1}$ be i.i.d. draws from $G_N$,
and consider the array of overlaps
$(R_{\ell\ell'})_{\ell,\ell'\geq 1}$,
then  it is known \cite{PanchUlt13} that this array satisfies the ultrametric structure proposed in the physics literature \cite{Mez84}.
Moreover, it can be shown (see, e.g., \cite{PanchSKBook}) that the limiting law of $R_{12}$ is given  by the Parisi measure, $\zeta$, 
 the unique minimizer of the Parisi formula  \cite{AuffChenSC15, TalPF}.

In \cite{PanchSGSD13}, a family of invariance principles, called the cavity equations, were introduced
for mixed $p$-spin models. It was shown there that if the spin array 
\begin{equation}\label{eq:spin-array}
(\sigma^\ell_i)_{1\leq i\leq N, 1 \leq \ell}.
\end{equation}
satisfies the cavity equations
then they can be uniquely characterized by their overlap distributions. It was also shown 
that  mixed $p$-spin models satisfy these cavity equations modulo a regularizing perturbation that does not affect the free energy.
In fact, it can be shown (see \prettyref{prop:cavity-equations} below)  by a standard argument that
generic models satisfy these equations without perturbations. Consequently, the 
spin distributions are characterized by $\zeta$ as well by the results of \cite{PanchSGSD13}.

Panchenko also showed that the Bolthausen--Sznitman 
invariance \cite{BoltSznit98} can be utilized to provide a formula for the distribution of spins \cite{PanchSGSD13,PanchSKBook}.
The main goal of this note is to present an alternative expression  for spin distributions of generic models  in terms of
a family of branching diffusions. This new way of describing the spin distributions provides expressions for moments of spin statistics as solutions of certain partial differential equations.
We show a few examples and applications in  \prettyref{sec:Examples}.   One of our main applications is that these spin distributions
satisfy a multi-scale generalization of the Thouless--Anderson--Palmer (TAP) equations 
similar to that suggested in \cite{MPV87} and \cite{MV85}.
This complements the authors previous work on the Thouless--Anderson--Palmer equations for generic $p$-spin models
at finite particle number \cite{AuffJag16}.

\subsection{Main results}
In this paper we assume that the reader is familiar with the theory of spin distributions. For a textbook introduction, see \cite[Chapter 4]{PanchSKBook}.
We include the relevant definitions and constructions in the Appendix for the reader's convenience.  
The starting point of our analysis is the following observation,
which says that the generic $p$-spin models satisfy the cavity equations. These equations are stated in \eqref{eq:cavity-eq}.

\begin{prop}\label{prop:cavity-equations}
Let $\nu$ be a limit of the spin array \eqref{eq:spin-array} for a generic $p$-spin model.
Then $\nu$ satisfies the cavity equations \eqref{eq:cavity-eq} for $r=0$. In particular, $\nu$ is unique. 
\end{prop}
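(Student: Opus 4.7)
The plan is to reduce the proposition to Panchenko's uniqueness theorem for solutions of the cavity equations from \cite{PanchSGSD13}. Once any limit $\nu$ of the spin array of a generic model is shown to satisfy \eqref{eq:cavity-eq} at $r=0$, the unique characterization of $\nu$ by its overlap distribution, which itself is the Parisi measure $\zeta$ by \cite{AuffChenSC15,TalPF}, is immediate from that result.

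Producing the cavity equations for a generic model rests on two standard ingredients. First, I would establish the Ghirlanda--Guerra (GG) identities in the limit without any auxiliary perturbation. The concentration-of-free-energy argument, applied by differentiating $\frac{1}{N}\E\log\sum_\sigma e^{H_N(\sigma)}$ in $\beta_p$ for each $p$ with $\beta_p\neq 0$, produces in the $N\to\infty$ limit the usual GG-type identity tested against the single-overlap test function $x^p$. The generic hypothesis is precisely that the linear span of $\{1\}\cup\{x^p:\beta_p\neq 0\}$ is dense in $(C([-1,1]),\norm{\cdot}_\infty)$, so a density argument upgrades this family of identities to the full GG identities tested against any continuous function of a single overlap. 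Second, I would perform the usual one-step cavity computation: adding a single coordinate $\sigma_{N+1}$ and applying Gaussian integration by parts in the couplings expresses the effect on the spin array as a tilt of the Gibbs measure by an explicit polynomial of the overlaps with $\sigma^1$. Combining this tilt identity with the GG identities just established, one verifies the invariance of $\nu$ under the cavity transformation up to a remainder that vanishes with $N$, which is precisely \eqref{eq:cavity-eq} at $r=0$.

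Uniqueness of $\nu$ then follows directly from the cavity equations theorem of \cite{PanchSGSD13} (see also \cite[Chapter 4]{PanchSKBook}), which states that any symmetric, exchangeable spin distribution satisfying \eqref{eq:cavity-eq} is determined by its overlap law.

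The main technical obstacle is the bookkeeping required to match the Gaussian cavity computation to the precise algebraic form of \eqref{eq:cavity-eq} and to verify that the error terms produced by finite-$N$ concentration do vanish in the limit. No fundamentally new idea is required beyond the perturbed case treated in \cite[Chapter 3]{PanchSKBook}: for generic models, the richness of the coupling spectrum $\{\beta_p:\beta_p\neq 0\}$ plays the role that an auxiliary GG perturbation plays in the standard treatment, so that the GG identities, and hence the cavity equations, hold in the limit without any extra Hamiltonian term.
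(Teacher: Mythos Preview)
Your outline is in the right spirit and cites the correct references, but the mechanism you describe for the cavity step is not quite what the argument actually uses, and this hides the one genuine technical point. The cavity computation is not Gaussian integration by parts: it is the algebraic decomposition $H_N(\sigma)=\tilde H_N(\rho)+\sum_{i\leq n}\sigma_i\, y_{N,i}(\rho)+r_N(\sigma)$ (the paper records this as Lemma~\ref{lem:decomposition-lemma}), which rewrites spin moments as ratios of $\tanh(y_{N,i})$ and $\prod\cosh(y_{N,i})$ integrated against the Gibbs measure $G'$ of the \emph{reduced} Hamiltonian $\tilde H_N$ on $\Sigma_{N-n}$. The tilt is by $\cosh$ of a Gaussian cavity field with covariance $\xi'(R_{12})+o(1)$, not by ``a polynomial of the overlaps.'' The substantive step is then to replace $(G',y_{N,i})$ by $(G,g_{\xi',i})$ in the limit; after a Stone--Weierstrass approximation this reduces to showing that $G'$ and $G$ have the same limiting overlap law.

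Here the generic hypothesis enters, but not by ``combining the tilt identity with the GG identities.'' The GG identities themselves are not applied to the cavity expression. What is used is that for a generic model the limiting overlap distribution is \emph{unique} and is recovered from the limiting free energy by differentiation in each $\beta_p$ (see \cite[Theorems~3.7 and~2.13]{PanchSKBook}). Since $\tilde H_N$ and $H_{N-n}$ have covariances differing by $O(1)$ uniformly, an interpolation shows their free energies agree in the limit, hence so do their overlap laws, and the cavity equation \eqref{eq:cavity-eq} at $r=0$ follows. Your closing paragraph correctly identifies that genericity replaces the auxiliary perturbation of \cite[Chapter~3]{PanchSKBook}; the missing piece is that it does so via this free-energy/uniqueness route rather than by feeding GG identities directly into the cavity identity. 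Uniqueness of $\nu$ then follows from \cite{PanchSGSD13} as you state.
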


Let $q_{*}>0$ and $U$ be a positive, ultrametric subset of the sphere of radius $\sqrt{q_*}$ in $L^{2}([0,1])$ in the sense that for any $x,y, z\in U$, we have $ (x,y) \geq 0$ and $\| x-z \| \leq \max \{\| x-y \|, \| y-z \| \}$. Define the {driving process on} $U$ to be the Gaussian process,
$B_{t}(\sigma)$, indexed by $(t,\sigma)\in[0,q_{*}]\times U$, which
is centered, a.s. continuous in time and measurable in space, with covariance
\begin{equation}\label{eq:Bromw}
\text{Cov}_{B}((t_{1},\sigma^{1}),(t_{2},\sigma^{2}))=(t_{1}\wedge t_{2})\wedge(\sigma^{1},\sigma^{2}).
\end{equation}
Put concretely, for each fixed $\sigma,$ $B_{t}(\sigma)$ is a Brownian
motion and for finitely many $(\sigma^{i})$, $(B_t(\sigma^i))$ is
a family of branching Brownian motions whose branching times are given
by the inner products between these $\sigma^{i}$. 

We then define the {cavity field process on $U$} as the
solution, $Y_{t}(\sigma)$, of the SDE 
\begin{equation}
\begin{cases}
dY_{t}(\sigma)=\sqrt{\xi''(t)}dB_{t}(\sigma)\\
Y_{0}(\sigma)=h.
\end{cases}\label{eq:cav-field-proc}
\end{equation}

Let $\zeta$ be the Parisi measure for the generic $p$-spin model. Let $u$ be the unique weak solution to the Parisi initial value problem  on $(0,1)\times\R$ ,
\begin{equation}
\begin{cases}
u_{t}+\frac{\xi''(t)}{2}\left(u_{xx}+\zeta([0,t])u_{x}^{2}\right)=0, & \\
u(1,x)=\log\cosh(x).
\end{cases}\label{eq:ParisiIVP}
\end{equation}
For the definition of weak solution in this setting and basic properties
of $u$ see \cite{JagTobSC16}. We now define the local field process,
$X_{t}(\sigma)$, to be the solution to the SDE 
\begin{equation}
\begin{cases}
dX_{t}(\sigma)=\xi''(t)\zeta([0,t])u_{x}(t,X_{t}(\sigma))dt+dY_{t}(\sigma)\\
X_{0}(\sigma)=h.
\end{cases}\label{eq:local-field-process}
\end{equation}
Finally, let the magnetization process be $M_{t}(\sigma)=u_{x}(t,X_{t}(\sigma))$.
We will show that the process $X_{q_*}(\sigma)$ is related to a re-arrangement of $Y_{q_*}(\sigma)$. 
If we view $\sigma$ as a state, then $M_{q_*}(\sigma)$ will be the magnetization of this state. The basic properties of these processes, e.g., existence, measurability, continuity, etc, 
are studied briefly in \prettyref{app:driving}.
We invite the
reader to compare their definitions to \cite[Eq. IV.51]{MPV87}
and \cite[Eq. 0.20]{BoltSznit98} (see also \cite{ArgAiz09}). We remind the reader here that the support of the asymptotic Gibbs measure
for a generic $p$-spin model is positive and ultrametric by Panchenko's ultrametricity theorem and Talagrand's positivity principle \cite{PanchSKBook}, provided we take $q_*=\sup\supp(\zeta)$.

Now, for a fixed measurable function $f$ on $L^{2}([0,1])$, write the measure $\mu_{\sigma}^{f}$, on $\{-1,1\}\times\R$ as the measure with density $p(s,y;f)$ given by
\[
p(s,y;f)\propto e^{sy}e^{-\frac{(y-f)^2}{2(\xi'(1)-\xi'(q_*))}}.
\]

Observe that 
by an application of Girsanov's theorem (see specifically
\cite[Lemma 8.3.1]{JagTobPD15}),
the measure above is equivalently described as the measure on $\{-1,1\}\times\R$
such that for any bounded measurable $\phi$, 
\begin{equation}
\int\phi \; d\mu_{\sigma}^{f}:=\E\left(\frac{\sum_{s\in\{\pm1\}}\phi(s,X_{1})e_{1}^{X_{1}s}}{2\cosh(X_{1})}\Bigg\vert X_{q_{*}}(\sigma)=f(\sigma)\right).\label{eq:pure-state}
\end{equation}
\noindent For any bounded measurable $\phi$, we let $\left\langle \phi\right\rangle _{\sigma}^{f}$,
denote its expected value with respect to $\mu_\sigma^f$.  When it is unambiguous
we omit the superscript for the boundary data. For multiple
copies, $(s_{i},y_{i})_{i=1}^{\infty}$, drawn from the product $\mu_{\sigma}^{\tensor\infty}$,
we also denote the average by $\left\langle \cdot\right\rangle _{\sigma}$.

 Let $\mu$ be a random measure on $L^{2}([0,1])$ such that
the corresponding overlap array satisfies the Ghirlanda-Guerra identities (see \prettyref{app:cavity} for the definition of these identities). Consider the law of the random
variables $(S,Y)$ defined through the relation: 
\begin{equation}
\E\left\langle \phi(S,Y)\right\rangle =\E\int\left\langle \phi\right\rangle _{\sigma}^{X}d\mu(\sigma)\label{eq:loc-av-data-x}
\end{equation}
and the random variables $(S',Y')$ defined through the relation 
\[
\E\left\langle \phi(S',Y')\right\rangle =\E\int\left\langle \phi\right\rangle _{\sigma}^{Y}\frac{\cosh(Y_{q_{*}}(\sigma))}{\int\cosh(Y_{q_{*}}(\sigma))d\mu(\sigma)}d\mu(\sigma).
\]
Let $(S_{i},Y_{i})_{i\geq 1}$ be drawn from $\left(\mu_{\sigma}^{X}\right)^{\tensor\infty}$and
$(S'_{i},Y_{i}')$ be drawn from $\left(\mu_{\sigma}^{Y}\right)^{\tensor\infty}$
where $\sigma$ is drawn from $\mu$. For i.i.d. draws $(\sigma^{\ell})_{\ell\geq1}$
from $\mu^{\tensor\infty}$, we define $(S_{i}^{\ell},Y_{i}^{\ell})$ and
$(S_{i}^{\prime \ell},Y_{i}^{\prime \ell})$ analogously.

The main result of this note is the following alternative representation for spins from cavity
invariant measures. We let $\cM_{inv}^{\xi}$ denote the space of law of exchangeable arrays with entries in $\{\pm 1\}$ that satisfy the cavity equations and the Ghirlanda-Guerra identities. 
\begin{thm}
\label{thm:spins-equivalent} We have the following.
\begin{enumerate}
\item For any generic model $\xi$ and any asymptotic Gibbs measure $\mu$, let $(\sigma_{\ell})_{\ell\geq1}$
be i.i.d. draws from $\mu$, let $(S_{i}^{\ell},Y_{i}^{\ell})$ and $(S_{i}^{\prime \ell},Y_{i}^{\prime \ell})$
be defined as above with $\sigma=\sigma^{\ell}$. Then these random variables
are equal in distribution.
\item For any measure $\nu$ in $\cM_{inv}^{\xi}$, let $(s_{i}^{\ell})$
denote the array of spins and $\mu$ denote its corresponding asymptotic
Gibbs measure. Let $(S_{i}^{\ell})$ be defined as above with $\zeta=\E\mu^{\tensor^{2}}(\left(\sigma^{1},\sigma^{2}\right)\in\cdot)$.
Then we have 
\[
(s_{i}^{\ell})\eqdist(S_{i}^{\ell}).
\]
\end{enumerate}
\end{thm}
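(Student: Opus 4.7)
The plan is straightforward for Part 2 and focused on a single tilting identity for Part 1. For Part 2, I would invoke Panchenko's result from \cite{PanchSGSD13,PanchSKBook} that for $\nu\in\cM_{inv}^\xi$ with spins $(s_i^\ell)$ and corresponding asymptotic Gibbs measure $\mu$, the spin distribution is given precisely by the Bolthausen--Sznitman/cosh-reweighted cavity formula that defines $(S_i^{\prime\ell})$ here. Combined with the assertion of Part 1 that $(S_i^\ell)\eqdist(S_i^{\prime\ell})$, Part 2 follows immediately. (Alternatively, one could use \prettyref{prop:cavity-equations} directly: verify that the exchangeable spin array extracted from $(S_i^\ell)$ lies in $\cM_{inv}^\xi$ with overlap distribution $\zeta$, and invoke uniqueness.)

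For Part 1, the essential content is that adding the drift $\xi''(t)\zeta([0,t])u_x(t,\cdot)$ that converts $Y$ into $X$ in \eqref{eq:local-field-process} is, after integration against $\mu$, equivalent to reweighting $\mu$ by $\cosh(Y_{q_*}(\sigma))$. Concretely, I would reduce Part 1 to the tilting identity
\begin{equation*}
\E\int g(\sigma,X_{q_*}(\sigma))\,d\mu(\sigma) = \E\int g(\sigma,Y_{q_*}(\sigma))\,\frac{\cosh(Y_{q_*}(\sigma))}{\int\cosh(Y_{q_*}(\sigma'))\,d\mu(\sigma')}\,d\mu(\sigma),
\end{equation*}
valid for every bounded measurable $g\colon U\times\R\to\R$; Part 1 then follows by taking $g(\sigma,x)=\langle\phi\rangle_\sigma^x$, since $\mu_\sigma^f$ depends on $f$ only through $f(\sigma)$. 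The preparatory step is an application of It\^o's formula to $u(t,Y_t(\sigma))$ which, combined with \eqref{eq:ParisiIVP}, produces
\begin{equation*}
u(q_*,Y_{q_*}(\sigma)) - u(0,h) = \int_0^{q_*} u_x(t,Y_t(\sigma))\,dY_t(\sigma) - \tfrac{1}{2}\int_0^{q_*}\xi''(t)\zeta([0,t])u_x^2(t,Y_t(\sigma))\,dt.
\end{equation*}
This is the bridge between the Girsanov exponent for \eqref{eq:local-field-process} and the terminal datum $u(q_*,\cdot)$, whose exponential is by Cole--Hopf on $[q_*,1]$ equal up to a constant to $\E[\cosh(X_1)\mid X_{q_*}(\sigma)=\,\cdot\,]$.

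To establish the tilting identity I would approximate $\zeta$ weakly by finitely supported measures $\zeta_n$ with atoms at $0=q_0<q_1<\cdots<q_n=q_*$ carrying weights $m_{i+1}-m_i$. For such $\zeta_n$, the Parisi PDE splits into $n$ linear heat-type equations via Cole--Hopf on each subinterval, and \eqref{eq:local-field-process} degenerates to a discrete Gaussian cascade with a tilt of the form $\exp(m_i u_n(q_{i+1},\cdot))$ at each level. At each level, the Bolthausen--Sznitman invariance of the Ruelle Probability Cascade underlying $\mu$'s ultrametric support (see \cite{BoltSznit98,PanchSKBook}) converts the level-$i$ drift into a level-$i$ $\cosh$-reweighting of the cascade weights, and iterating telescopes the product into the single $\cosh$-tilt of the target identity. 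The main obstacle is then the continuum limit $n\to\infty$: one must control $u_n$ and $(u_n)_x$ along the random paths $Y_t(\sigma)$ and show that the discrete cascade reweightings converge to $\cosh(Y_{q_*}(\sigma))/\int\cosh(Y_{q_*}(\sigma'))\,d\mu(\sigma')$. I would handle this via the Lipschitz stability of the Parisi PDE solution in $\zeta$ from \cite{JagTobSC16}, together with the a.s. continuity of $B_t(\sigma)$ in $\sigma$ established in \prettyref{app:driving}.
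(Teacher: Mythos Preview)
Your overall strategy---reduce to the atomic case via the Bolthausen--Sznitman/Panchenko invariance plus a Girsanov/Cole--Hopf identification, then pass to the limit in $\zeta$---is exactly the paper's approach. The paper packages the Girsanov step as \prettyref{lem:equiv-dist}, which identifies Panchenko's tilted cavity variables $g'_{\xi',i}(h_\alpha)$ directly with $X_{q_*}(h_\alpha)$; your It\^o computation on $u(t,Y_t)$ is the same calculation in different clothing.

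There is one point where your write-up is looser than the paper and could become a gap if not handled carefully. You speak of ``the Ruelle Probability Cascade underlying $\mu$'s ultrametric support'' and propose to apply the level-by-level invariance to it after discretizing $\zeta$ to $\zeta_n$. But for a general $\mu$ with continuous overlap law $\zeta$, there is no finite-level RPC sitting inside $\mu$ whose levels match the atoms of $\zeta_n$, so the invariance cannot be applied to $\mu$ itself. The paper sidesteps this by first observing that both sides of the desired identity are functionals of the \emph{overlap array} alone (this is where the Ghirlanda--Guerra identities enter), so that for atomic $\zeta$ one may replace $\mu$ by an honest $RPC(\zeta)$ and apply Panchenko's proposition verbatim. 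The approximation is then carried out in $\zeta$ (and hence simultaneously in the RPC), not just in the drift while holding $\mu$ fixed. Your limiting step also differs technically: the paper proves joint continuity of $(\zeta,Q)\mapsto\E\prod_i u_x(q_*,X_{q_*}(\sigma^i))$ via Stroock--Varadhan convergence of martingale problems (\prettyref{lem:R-continuous}), which cleanly handles the joint law of finitely many replicas; your route through Lipschitz stability of $u$ and pathwise continuity of $B_t(\sigma)$ would require more work to control the joint distribution of $(X_{q_*}(\sigma^\ell))_\ell$ under the approximation.
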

\begin{rem}  
In \cite{PanchSGSD13,PanchSKBook}, Panchenko obtained first a description of the laws of $(s_{i}^{\ell})$ 
in a finite replica symmetry breaking regime (i.e., when $\zeta$ consists of finitely many atoms)
 using Ruelle probability cascades (see \eqref{eq:spins-RPC-formula}). By sending the number of levels of replica symmetry breaking to infinity, he obtains a formula that is valid for any generic $p$-spin \cite[Theorem 4.2]{PanchSKBook}.
This is a key step in our proof of Theorem \ref{thm:spins-equivalent}. 
At finite replica symmetry breaking, the connection to the process $X_t$ can already
be seen in \cite[pp 249-250]{BoltSznit98} as a consequence of the
Bolthausen-Sznitman invariance principle.
\end{rem}

Let us now briefly present an application of this result. Let $\nu$ be
the spin distribution for a generic model and let $\mu$ be the corresponding 
asymptotic Gibbs measure. Let $\sigma\in\supp(\mu)$ and fix $q\in[0,q_{*}]$, where $q_*=\sup\supp(\zeta)$.
Let 
\[
B(\sigma,q)=\left\{ \sigma'\in\supp(\mu):(\sigma,\sigma')\geq q\right\} 
\]
be the set of points in the support of $\mu$ that are of overlap at most $q$ with $\sigma$. Recall that
by Panchenko's ultrametricity theorem \cite{PanchUlt13}, we may decompose 
\[
\supp \; \mu =\cup_{\alpha}B(\sigma^{\alpha},q)
\]
 where this union is disjoint. If we call $W_{\alpha}=B(\sigma^{\alpha},q)$,
we can then consider the law of $(s,y)$, the spin and the cavity
field, but now conditionally on $W_{\alpha}$. That is, let $\left\langle \cdot\right\rangle _{\alpha}$
denote the conditional law $\mu(\cdot\vert W_{\alpha}).$ We then have
the following result.

\begin{thm}\label{thm:multi-TAP}
(Mezard\textendash Virasoro multiscale Thouless\textendash Anderson\textendash Palmer
equations) We have that 
\[
\left\langle s\right\rangle _{\alpha}=u_{x}\bigg(q,\left\langle y\right\rangle _{\alpha}-\int_{q}^{1}\xi''(t)\zeta([0,t])dt\cdot\left\langle s\right\rangle _{\alpha}\bigg)
\]
where again $u_{x}$ is the first spatial derivative of the Parisi
PDE corresponding to $\zeta$.
\end{thm}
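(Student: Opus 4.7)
The plan is to combine Theorem \ref{thm:spins-equivalent}(1) with the martingale structure induced by the Parisi PDE \eqref{eq:ParisiIVP} and the ultrametric branching of the driving process to reduce both $\langle s \rangle_\alpha$ and $\langle y \rangle_\alpha$ to explicit functionals of the common value $X_q^\alpha := X_q(\sigma)$ for $\sigma \in W_\alpha$.

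By Theorem \ref{thm:spins-equivalent}(1) together with \eqref{eq:pure-state}, the quantities $\langle s \rangle_\alpha$ and $\langle y \rangle_\alpha$ are $\mu|W_\alpha$-averages of $\E[\tanh(X_1)\mid X_{q_*}(\sigma)]$ and $\E[X_1\mid X_{q_*}(\sigma)]$ respectively. Ito's formula applied to $u(t, X_t(\sigma))$ using \eqref{eq:ParisiIVP} shows that $M_t(\sigma) = u_x(t, X_t(\sigma))$ is a martingale with $dM_t(\sigma) = \sqrt{\xi''(t)}\, u_{xx}(t, X_t(\sigma))\, dB_t(\sigma)$. On $[q_*, 1]$ the PDE reduces, via the Hopf--Cole transform, to the backward heat equation with boundary $\log\cosh$, giving $u_x(q_*, x) = \tanh(x)$; consequently $\E[\tanh(X_1)\mid X_{q_*}(\sigma)] = \tanh(X_{q_*}(\sigma))$ and $\E[X_1\mid X_{q_*}(\sigma)] = X_{q_*}(\sigma) + (\xi'(1) - \xi'(q_*))\tanh(X_{q_*}(\sigma))$. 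Finally, \eqref{eq:Bromw} forces $\mathrm{Var}(B_t(\sigma) - B_t(\sigma')) = 0$ for $\sigma, \sigma' \in W_\alpha$ and $t \leq q$, so $B_t(\sigma) = B_t^\alpha$ and $X_t(\sigma) = X_t^\alpha$ are common across the cluster for $t \leq q$.

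The crux of the argument is the cascade averaging identity: for any martingale $N_t(\sigma)$ in the filtration of the driving process,
\[
\frac{1}{\mu(W_\alpha)}\int_{W_\alpha} N_t(\sigma)\, d\mu(\sigma) = N_q^\alpha, \qquad t \in [q, q_*].
\]
At finite replica symmetry breaking, where $\zeta = \sum_{j=0}^k (m_{j+1} - m_j)\delta_{r_j}$ with $r_0 = q$, $r_k = q_*$ and $\mu$ is a Ruelle probability cascade, this is an inductive consequence of the Bolthausen--Sznitman invariance \cite{BoltSznit98}: within each sub-cluster at level $r_j$ the cascade weights are Poisson--Dirichlet$(m_{j+1})$ distributed independently of the above-$r_j$ branching Brownian motion increments, so weighted averages of above-$r_j$ martingales collapse to their value at $r_j$. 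This is in essence Panchenko's finite-RSB spin-distribution formula \cite[Theorem 4.2]{PanchSKBook}. The general case follows by approximating $\zeta$ with finitely supported Parisi measures and passing to the limit, using the continuity of $u_x$ in $\zeta$ \cite{JagTobSC16} and the stability of cascade representations of asymptotic Gibbs measures.

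Applied with $N_t = u_x(t, X_t)$ at $t = q_*$, the identity gives $\langle s \rangle_\alpha = u_x(q, X_q^\alpha)$. Applied with $N_t = Y_t$ it gives $\int_{W_\alpha} Y_{q_*}(\sigma)\, d\mu(\sigma)/\mu(W_\alpha) = Y_q^\alpha$, so the martingale part of the SDE \eqref{eq:local-field-process} averages to zero while the drift integrates to $\int_q^{q_*}\xi''(t)\zeta([0,t])\,dt \cdot u_x(q, X_q^\alpha)$ by a further application of the identity. Combining with $\int_{q_*}^1 \xi''(t)\,dt = \xi'(1) - \xi'(q_*)$ yields
\[
\langle y \rangle_\alpha = X_q^\alpha + \int_q^1 \xi''(t)\zeta([0,t])\,dt \cdot \langle s\rangle_\alpha.
\]
Solving for $X_q^\alpha = \langle y\rangle_\alpha - \int_q^1 \xi''(t)\zeta([0,t])\,dt\cdot \langle s\rangle_\alpha$ and substituting into $\langle s \rangle_\alpha = u_x(q, X_q^\alpha)$ produces the claimed TAP equation. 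The main obstacle is the cascade averaging identity in the continuous RSB case; once granted, the remainder reduces to routine Ito calculus and bookkeeping.
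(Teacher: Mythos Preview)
Your overall route matches the paper's: both arguments reduce to identifying $\langle s\rangle_\alpha$ with $u_x(q,X_q^{\sigma^\alpha})$ and $\langle y\rangle_\alpha$ with $\E[X_1^{\sigma^\alpha}\mid\cF_q]$, after which the SDE \eqref{eq:local-field-process} together with the martingale property of $M_t=u_x(t,X_t)$ gives $\langle y\rangle_\alpha = X_q^{\sigma^\alpha}+\int_q^1\xi''\zeta\,dt\cdot\langle s\rangle_\alpha$, and one solves for $X_q^{\sigma^\alpha}$ and substitutes. The paper reaches the two identifications by matching joint moments and recording the conclusion as an equality \emph{in law}; it never asserts a pointwise collapse of $\mu$-averages and it does not pass through Bolthausen--Sznitman or a finite-RSB approximation at this stage (that machinery was already absorbed into Theorem~\ref{thm:spins-equivalent}).

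Where your proposal departs from the paper, it also goes wrong. Your ``cascade averaging identity'' is stated as an almost-sure equality
\[
\frac{1}{\mu(W_\alpha)}\int_{W_\alpha} N_t(\sigma)\,d\mu(\sigma)=N_q^\alpha,\qquad t\in[q,q_*],
\]
for any driving-process martingale $N$. This is false. Already at one level of RSB (say $\zeta=m\delta_0+(1-m)\delta_{q_*}$ and $q=0$), the left side is a Poisson--Dirichlet$(m)$ weighted average of conditionally i.i.d.\ copies of $N_{q_*}$, which has strictly positive conditional variance whenever $N$ is nonconstant on $[q,q_*]$; so it cannot equal the $\cF_q$-measurable constant $N_q^\alpha$. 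Bolthausen--Sznitman invariance is a statement about the law of \emph{tilted} cascade weights, not about weighted averages collapsing to constants, so it does not produce this identity, and your inductive step does not go through. The paper sidesteps this by working only in joint law---once $(\langle s\rangle_\alpha,\langle y\rangle_\alpha)_\alpha$ is identified in law with $(u_x(q,X_q^{\sigma^\alpha}),\E[X_1^{\sigma^\alpha}\mid\cF_q])_\alpha$, the TAP relation, being a deterministic relation on the pair, transfers automatically. If you weaken your identity to the in-law joint identification the paper uses, the rest of your argument aligns with the paper's; as written, however, the core lemma you isolate does not hold.
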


\subsection*{Acknowledgements}

The authors  would like to thank Louis-Pierre Arguin, G\'erard Ben Arous, Dmitry Panchenko, and Ian Tobasco for helpful discussions. 
This research was conducted while A.A. was supported by NSF DMS-1597864 and NSF Grant CAREER DMS-1653552  
and A.J. was supported by NSF OISE-1604232.

\section{Cavity equations for generic models}\label{app:cavity-generic}

\subsection{Decomposition and regularity of mixed $p$-spin Hamiltonians}\label{app:decomposition}

In this section, we present some basic properties of mixed $p$-spin Hamiltonians. Let $1 \leq n < N$.
For $\sigma = (\sigma_{1},\ldots, \sigma_{N}) \in \Sigma_{N}$, $\rho(\sigma) = (\sigma_{n+1}, \ldots, \sigma_{N}) \in \Sigma_{N-n}$, we can write the Hamiltonian $H_{N}$
as
\begin{equation}\label{eq:ll}
H_{N}(\sigma)=\tilde H_{N}(\sigma)+\sum_{i=1}^{n}\sigma_{i}y_{N,i}(\rho)+r_{N}(\sigma).
\end{equation}
where the processes $\tilde H_{N}, y_{N,i}$ and $r_{N}$ satisfy  the following lemma.
\begin{lem}
\label{lem:decomposition-lemma} There exist centered Gaussian processes $\tilde H_{N},y_{N},r_{N}$
such that \eqref{eq:ll} holds and 
\begin{align*}
\mathbb{E}\tilde H_{N}(\sigma^{1})\tilde H_{N}(\sigma^{2})= & N\xi\left (\frac{{N-1}}{N}R_{12}\right),\\
\mathbb{E}y^i_{N}(\sigma^{1})y^j_{N}(\sigma^{2})= &\delta_{ij}( \xi'(R_{12})+o_{N}(1)),\\
\mathbb{E}r_{N}(\sigma^{1})r_{N}(\sigma^{2})= & O(N^{-1}).
\end{align*}
Furthermore, there exist positive constant $C_{1}$ and $C_{2}$ so
that with probability at least $1-e^{-C_{1}N},$
\[
\max_{\sigma\in\Sigma_{N-1}}|r_{N}(1,\sigma)-r_{N}(-1,\sigma)|\leq\frac{{C_{2}}}{\sqrt{N}},
\]
and a positive constant $C_{3}$ so that 
\begin{equation}\label{eq:Delta}
 \mathbb E \exp\bigg(2 \max_{\sigma \in \Sigma_{N-1}} |r_{N}(1,\sigma) - r_{N}(-1,\sigma)|\bigg) \leq C_{3}.
\end{equation}

\end{lem}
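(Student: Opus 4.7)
The plan is to define $\tilde H_{N}$, $y_{N,i}$, $r_{N}$ explicitly by partitioning each $p$-spin term according to the pattern of its indices relative to the cavity block $\{1,\ldots,n\}$, to read off the three covariance identities by linearity and the independence of the Gaussians $g_{i_{1},\ldots,i_{p}}$, and to establish the almost-sure and exponential moment bounds for the difference $r_{N}(1,\rho)-r_{N}(-1,\rho)$ via Gaussian concentration.

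For the construction I would classify each tuple $(i_{1},\ldots,i_{p})$ in the $p$-spin sum defining $H_{N}$ by the set $S=\{k\colon i_{k}\leq n\}$: terms with $|S|=0$ depend only on $\rho$ and are collected into $\tilde H_{N}$; terms with $|S|=1$, say $i_{k}=i$, factor as $\sigma_{i}$ times a monomial in $\rho$ and are absorbed into $\sigma_{i}y_{N,i}(\rho)$; terms with $|S|\geq 2$ form $r_{N}$. The covariance identities then follow directly from the independence of the $g_{i_{1},\ldots,i_{p}}$ across distinct multi-indices: the $\tilde H$ piece collapses to a geometric sum yielding $N\xi\bigl((1-n/N)R_{12}\bigr)$, where $R_{12}$ is read as the overlap of $\rho(\sigma^{1})$ and $\rho(\sigma^{2})$; the $y$-piece gives $\delta_{ij}$ times $\sum_{p}p\beta_{p}^{2}\bigl((1-n/N)R_{12}\bigr)^{p-1}$, which equals $\xi'(R_{12})+o_{N}(1)$ uniformly in $R_{12}\in[-1,1]$ by dominated convergence (using $\sum_{p}p\beta_{p}^{2}<\infty$, implicit in the setup); and for $r_{N}$ the dominant $|S|=2$ contribution scales like $\binom{p}{2}(N-1)^{p-2}/N^{p-1}=O(p^{2}/N)$, giving $O(N^{-1})$.

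The heart of the argument is the maximal estimate, for which I would specialize to $n=1$. Expanding in $\sigma_{1}$ and using $\sigma_{1}^{2}=1$, write $r_{N}(\sigma)=r_{N}^{\mathrm{even}}(\rho)+\sigma_{1}\,r_{N}^{\mathrm{odd}}(\rho)$, so that the difference reduces to $2\,r_{N}^{\mathrm{odd}}(\rho)$, a centered Gaussian process on $\Sigma_{N-1}$. The same combinatorial count as above, but with the leading term now $|S|=3$, should give $\mathrm{Var}(r_{N}^{\mathrm{odd}}(\rho))=O(1/N^{2})$ uniformly in $\rho$. A Gaussian tail bound combined with the union bound over the $2^{N-1}$ configurations then yields
\[
\prob\Bigl(\max_{\rho\in\Sigma_{N-1}}|r_{N}^{\mathrm{odd}}(\rho)|>t\Bigr)\leq 2^{N}\exp\bigl(-c\,t^{2}N^{2}\bigr),
\]
and the choice $t=C_{2}/\sqrt{N}$ with $C_{2}$ sufficiently large produces the required $e^{-C_{1}N}$ bound. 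The exponential moment estimate \eqref{eq:Delta} would follow by integrating this tail: from $\E e^{2M}=1+\int_{0}^{\infty}2e^{2t}\prob(M>t)\,dt$, split at $t=C_{2}/\sqrt{N}$ and use that $e^{2t-cNt^{2}}$ has integral bounded uniformly in $N$.

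I expect this last step to be the main obstacle. The crux is recognizing that only the odd-in-$\sigma_{1}$ part of $r_{N}$ survives the subtraction, that this odd part has variance $O(1/N^{2})$ — one power of $1/N$ better than $r_{N}$ itself — because its leading contribution comes from $|S|=3$ rather than $|S|=2$, and that this improved variance is precisely what is needed to absorb the $(N-1)\log 2$ entropy from the union bound and still yield exponential decay in $N$.
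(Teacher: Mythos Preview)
Your proposal is correct and matches the paper's argument: the same decomposition by the number of cavity indices, the same covariance counts, and the same observation that only the odd-in-$\sigma_{1}$ part of $r_{N}$ survives the subtraction, giving uniform variance $O(N^{-2})$. The sole difference is cosmetic---the paper invokes Borell's inequality together with Sudakov--Fernique for the maximum, whereas you use the Gaussian tail plus a union bound over $\Sigma_{N-1}$; both succeed here because the $O(N^{-2})$ variance comfortably beats the $2^{N-1}$ entropy, and your tail-integration for \eqref{eq:Delta} is exactly the computation the paper leaves implicit.
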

\begin{proof} The lemma is a standard computation on Gaussian processes. Let us focus on the case $n=2$. The general case is analagous. Furthermore,  to simplify the exposition we will consider the pure $p$-spin model. The mixed case follows by linearity. Here, we set 
\begin{align*}
\tilde H_{N}(\rho(\sigma)) &= N^{-\frac{p-1}{2}}  \sum_{2 \leq i_{1},\ldots, i_{p} \leq N} g_{i_{1}\ldots i_{p}} \sigma_{i_{1}}\ldots \sigma_{i_{p}},\\
y_{N}(\rho(\sigma)) &= N^{-\frac{p-1}{2}}  \sum_{k=1}^{p}\sum_{\stackrel{2 \leq i_{1},\ldots, i_{p} \leq N}{i_{k}=1}} g_{i_{1}\ldots i_{p}}\sigma_{i_{1}}\ldots \sigma_{i_{p}}, \quad \text{and},\\ 
r_{N}(\sigma_{1}, \rho(\sigma)) &=  N^{-\frac{p-1}{2}} \sum_{l=2}^{p} \sigma_{1}^{\ell} \sum_{2\leq i_{1}, \ldots, i_{p-\ell} \leq N} J_{i_{1}\ldots i_{p-\ell}} \sigma_{i_{1}} \ldots \sigma_{i_{p-\ell}},
\end{align*}
where $g_{i_1,\ldots,i_p}$ are as above and $J_{i_{1}\ldots i_{p-\ell}}$ are centered Gaussian random variables with variance equal to $\binom{p}{\ell}$: $J_{i_{1}\ldots i_{p-\ell}}$ is the sum of the $g_{i_{1} \ldots i_{p}}$ where the index $1$ appears exactly $\ell$ times.
Computing the variance of these three Gaussian processes give us the the first three statements of the Lemma. For the second to last  and last statement, note that for any $\sigma \in \Sigma_{N-1}$, $r(1,\sigma)-r(-1,\sigma)$ is a centered Gaussian process with variance equal to 
$$ \frac{4}{N^{p-1}} \sum_{\ell = 3, \:\ell \text{ odd}}^{p} \binom{p}{\ell} (N-1)^{p-\ell} \leq \frac{C_{p}}{N^{2}}, $$ 
for some constant $C_{p}$.
A standard application of Borell's inequality and the Sudakov-Fernique's inequality \cite{ATbook} gives us the desired result. 
\end{proof}

We now turn to the proof that generic models satisfy the cavity equations. 
The argument is fairly standard -- see for example \cite[Chapter 3, Theorem 3.6]{PanchSKBook}. 

\begin{proof}[\emph{\textbf{Proof of \prettyref{prop:cavity-equations}}}]
Fix $n$ sites and a $C_{l}$ as in \eqref{eq:cavity-eq}. By site symmetry,
we may assume that these are the last $n$ sites. Our goal is then
to show that 
\begin{equation}\label{eq:hotdog}
\E\prod_{l\leq q}\left\langle \prod_{i\in C_{l}}\sigma_{i}\right\rangle =\E\prod_{i\leq q}\frac{\left\langle \prod_{i\in C_{l}}\tanh(g_{\xi',i}(\sigma))\cE_{n}\right\rangle }{\left\langle \cE_{n}\right\rangle }+o_{N}(1).
\end{equation}

With this observation in hand, note that  by Lemma \ref{lem:decomposition-lemma},
 the left side of \eqref{eq:hotdog} is equivalent to 
\[
\E\prod_{l\leq q}\frac{\left\langle \prod_{i\in C_{l}}\tanh(y_{N,i}(\sigma))\cE_{n,0}\right\rangle _{G'}}{\left\langle \cE_{n,0}\right\rangle _{G'}^{q}},
\]
where $G'$ is the Gibbs measure for $\tilde H_{N}$ on $\Sigma_{N-n}$. 

By a localization and Stone-Weierstrass argument, we see that it suffices
to show that 
\[
\E\prod_{l\leq q}\left\langle \prod_{i\in C_{l}}\tanh(y_{i}(\sigma))\cE_{n}\right\rangle _{G'}\left\langle \cE_{n}\right\rangle _{G'}^{k}=\E\prod_{l\leq q}\left\langle \prod_{i\in C_{l}}\tanh(g_{\xi'i}(\sigma))\cE_{n}\right\rangle _{G}\left\langle \cE_{n}\right\rangle _{G}^{k}+o_{N}(1).
\]
Evidently, this will follow provided the limiting overlap distribution
for $\E G'^{\tensor\infty}$ and $\E G^{\tensor\infty}$ are the same.
As generic models are known to have a unique limiting overlap distribution (by Lemma 3.6 of \cite{PanchSKBook}),
it suffices to show that in fact the overlap distribution of law of
$H'_{N}$ and $H_{N-n}$ are the same. Observe that
\begin{align*}
\abs{\text{Cov}_{H'}(\sigma^{1},\sigma^{2})-\text{Cov}_{H}(\sigma^{1},\sigma^{2})}  =N\abs{\xi\left(\frac{N}{N+n}R_{12}\right)-\xi(R_{12})} \leq C(\xi,n),
\end{align*}
uniformly for $\sigma^{1},\sigma^{2}\in\Sigma_{N-n}$, so that by
a standard interpolation argument (see, e.g., \cite[Theorem 3.6]{PanchSKBook}) we have that  the free energy of these 
two systems is the same in the limit $N\to\infty$. An explicit differentiation argument (see \cite[Theorem 3.7]{PanchSKBook})
combined with \cite[Theorem 2.13]{PanchSKBook} shows that the overlap distributions are the same. 
\end{proof}

\section{Proofs of representation formulas}\label{sec:proofs-at-infinite}

We now turn to the proofs of the results at infinite particle number.
Before we can state these results we need to recall certain basic
results of Panchenko from the theory of spin distributions \cite{PanchSGSD13,PanchSKBook}. The notation here follows \cite[Chapter 4]{PanchSKBook} (alternatively, see the Appendix below).

\subsection{Preliminaries}

We begin with the observation that
if we apply the cavity equations, \eqref{eq:cavity-eq} with $n=m$
and $r=0$, we get that, 
\[
\E\prod_{l\leq q}\prod_{i\in C_{l}}s_{i}^{l}=\E\frac{\prod_{l\leq q}\E'\prod_{i\in C_{l}}\tanh(G_{\xi',i}(\bar{\sigma}))\prod_{i\leq n}\cosh(G_{\xi',i}(\bar{\sigma}))}{\left(\E'\prod_{i\leq n}\cosh(G_{\xi',i})\right)^{q}}.
\]
Note that the righthand side is a function of only the overlap distribution
of $\bar{\sigma}$ corresponding to $\nu$. Let the law of $R_{12}$
be denoted by $\zeta$. Suppose that $\zeta$ consists of $r+1$ atom\emph{s.
}Then, since $\mu$ satisfies the Ghirlanda-Guerra identities by assumption,
we know that this can also be written as 
\begin{equation}
\E\prod_{l\leq q}\prod_{i\in C_{l}}s_{i}^{l}=\E\frac{\prod_{l\leq q}\sum_{\alpha}w_{\alpha}\prod_{i\in C_{l}}\tanh(g_{\xi',i}(h_{\alpha}))\prod_{i\leq n}\cosh(g_{\xi',i}(\bar{\sigma}))}{\left(\sum w_{\alpha}\prod_{i\leq n}\cosh(g_{\xi',i}(\bar{\sigma}))\right)^{q}}.\label{eq:spins-RPC-formula}
\end{equation}
Here, $(w_{\alpha})_{\alpha\in\partial\cA_{r}}$ are the weights corresponding
to a $RPC(\zeta)$ and $\{h_{\alpha}\}_{\alpha\in\cA_{r}}$ are the
corresponding vectors, with $\cA_{r}=\mathbb N^{r}$ viewed as a tree with $r$ levels.

For a vertex $\alpha$ of a tree, we denote by $\abs{\alpha}$ the
depth of $\alpha$, that is its (edge or vertex) distance from the
root. We denote by $p(\alpha)$ to be the set of vertices in the path
from the root to $\alpha$. For two vertices $\alpha,\beta$, we let
$\alpha\wedge\beta$ denote their least common ancestor, and we say
that $\alpha\precsim\beta$ if $\alpha\in p(\beta)$. In particular
$\alpha\precsim\alpha$.  We say that
$\alpha\nsim\beta$ if neither $\alpha\precsim\beta$ nor $\beta\precsim\alpha$.

In this setting, it is well known that $g_{\xi'}(h_{\alpha})$ has
the following explicit version. Let $(\eta_{\alpha})_{\alpha\in\cA_{r}}$
be i.i.d. gaussians, then 
\[
g_{\xi'}(h_{\alpha})=\sum_{\beta\precsim\alpha}\eta_{\beta}\left(\xi'(q_{\abs{\beta}})-\xi'(q_{\abs{\beta}-1})\right)^{1/2}.
\]
It was then showed by Panchenko that the above also has the following
representation in terms of ``tilted'' variables $\eta'$ as follows. 

We define the following family of functions $Z_{p}:\R^{p}\to\R$ with
$0\leq p\leq r$ recursively as follows. Let 
\[
Z_{r}(x)=\log\cosh(\sum_{i=1}^{r}x_{i}(\xi'(q_{i})-\xi'(q_{i-1}))^{1/2})
\]
and let 
\begin{equation}
Z_{p}(x)=\frac{1}{\zeta([0,q_{p}])}\log\int\exp\left(\zeta([0,q_{p}])\cdot Z_{p+1}(x,z)\right)d\gamma(z)\label{eq:Z-def}
\end{equation}
where $d\gamma$ is the standard gaussian measure on $\R$. We then
define the transition kernels 
\[
K_{p}(x,dx_{p+1})=\exp\left(\zeta([0,q_{p}])\left(Z_{p+1}(x,x_{p+1})-Z_{p}(x,x_{p+1})\right)\right)d\gamma(x_{p+1}).
\]
Also, we define $\eta'_{\alpha}$ as as the random variable with
law $K_{\abs{\alpha}}((\eta_{\beta})_{\beta\precsim\alpha},\cdot)$.
Finally, define 
\[
g'_{\xi'}(h_{\alpha})=\sum_{\beta\precsim\alpha}\eta'_{\beta}\left(\xi'(q_{\abs{\beta}})-\xi'(q_{\abs{\beta-1}})\right)^{1/2}.
\]
Define $g_{\xi',i}'$ analogously. We then have the following proposition.
\begin{prop}
[Panchenko \cite{PanchSGSD13}] Let $w_{\alpha}$ be as above and let 
\[
w_{\alpha}'=\frac{w_{\alpha}\prod_{i\leq n}\cosh(g_{\xi',i}(h_{\alpha}))}{\sum w_{\alpha}\prod_{i\leq n}\cosh(g_{\xi',i}(h_{\alpha}))}
\]
Then we have 
\[
\left((w_{\alpha}',g_{\xi',i}(h_{\alpha}))\right)_{\alpha}\eqdist\left((w_{\alpha},g'_{\xi',i}(h_{\alpha}))\right)_{\alpha}.
\]
\end{prop}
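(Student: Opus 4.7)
The plan is to view this proposition as a level-by-level application of the Bolthausen--Sznitman invariance principle for Ruelle probability cascades, with the functions $Z_p$ and kernels $K_p$ serving as bookkeeping for the exponential tilts at each level. The key observation is that, up to a normalizing constant, the tilting factor $\prod_{i\leq n}\cosh(g_{\xi',i}(h_\alpha))$ is a product of independent exponential functionals of the Gaussian increments $(\eta_\beta)_{\beta\precsim\alpha}$, so that at the finest level $r$ the tilt is $\exp(n\cdot Z_r(\eta_1,\ldots,\eta_r))$ after conditioning on the leaf $\alpha$.

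The first step is to recall the one-step invariance: if $(\tilde w_\alpha)$ are PD$(\zeta([0,q_p]),0)$ weights at level $p$ (conditional on the coarser structure) and $F$ is a function of an independent standard Gaussian $\eta$ attached to each $\alpha$, then the tilted weights $\tilde w_\alpha F(\eta_\alpha)/\sum_\gamma\tilde w_\gamma F(\eta_\gamma)$ together with $\eta$ are jointly equal in law to $(\tilde w_\alpha,\eta'_\alpha)$, where $\eta'_\alpha$ is drawn from the measure $F(\eta)\,d\gamma(\eta)/\int F\,d\gamma$, and the normalization $\left(\sum_\gamma\tilde w_\gamma F(\eta_\gamma)\right)^{1/\zeta([0,q_p])}$ is an independent multiplicative factor with the same law as the partition function built from $\tilde w$ alone. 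This is the standard cascade invariance; see Panchenko's book for the statement in this form.

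The second step is to apply this invariance recursively, starting from the leaves. At the last level, the tilt is $\exp(\zeta([0,q_r])Z_r(\eta_1,\ldots,\eta_r))$ (using $\zeta([0,q_r])=1$), and the one-step invariance replaces $\eta_r$ by a draw from $K_{r-1}((\eta_\beta)_{\beta\precsim\alpha},\cdot)$ while producing the new effective tilt $\exp(\zeta([0,q_{r-1}])Z_{r-1}(\eta_1,\ldots,\eta_{r-1}))$ at the next level up; this is exactly the content of the recursion \eqref{eq:Z-def}. Iterating this $r$ times and observing that at each step the original $\eta_\beta$ variables attached to the path $p(\alpha)$ get replaced by the tilted $\eta'_\beta$ variables with the correct conditional laws $K_{|\beta|-1}$, we conclude that the joint law $(w_\alpha', g_{\xi',i}(h_\alpha))$ coincides with $(w_\alpha, g'_{\xi',i}(h_\alpha))$. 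The factorization over the $n$ coordinates $i\leq n$ and the fact that $g_{\xi',i}$ shares the same tree-Gaussian structure across $i$ allows one to absorb all $n$ cavity fields into the scalar $Z_r$ by identifying the sites $i$ with independent Gaussian copies.

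The main obstacle will be bookkeeping the joint distribution on the whole tree, since the tilt at a given level depends not only on variables at that level but also on those above it through the path $\beta\precsim\alpha$. The Bolthausen--Sznitman invariance must be applied conditionally on the coarser data at each step, and one must verify that the normalizing constants produced at successive levels combine in a way consistent with the definitions of $Z_p$ and $K_p$. The recursive definition \eqref{eq:Z-def} has been engineered precisely so that this cancellation is exact, so once the one-step invariance is correctly formulated the induction goes through; the technical care is entirely in propagating the conditioning correctly across the $r$ levels of the cascade.
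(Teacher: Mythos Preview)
The paper does not give its own proof of this proposition: it is stated with attribution to Panchenko \cite{PanchSGSD13} and used as a black box. Your sketch is precisely the argument Panchenko gives (see also \cite[Theorem~4.4]{PanchSKBook}): one applies the Bolthausen--Sznitman invariance of the Poisson--Dirichlet weights level by level, starting from the leaves, with the recursion \eqref{eq:Z-def} designed so that the normalizing constant produced at level $p$ becomes the tilting factor at level $p-1$. So your approach is correct and coincides with the cited one.

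One point of bookkeeping to tighten: the functions $Z_p$ and kernels $K_p$ as written in the paper are for a \emph{single} cavity field, whereas the tilt in the proposition involves $n$ independent fields $g_{\xi',i}$, each built from its own independent family $(\eta^{(i)}_\beta)_{\beta\in\cA_r}$. Your last sentence gestures at this, but the clean way to handle it is to run the level-by-level invariance with the multivariate tilt $\exp\bigl(\sum_{i\leq n} Z_r(\eta^{(i)}_1,\ldots,\eta^{(i)}_r)\bigr)$; by independence of the $n$ families the tilted law factors over $i$, and one recovers $n$ independent copies of the $\eta'$-variables as defined in the paper. This is routine but should be stated explicitly rather than absorbed into ``identifying the sites $i$ with independent Gaussian copies.''
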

If we apply this proposition to \eqref{eq:spins-RPC-formula}, we
have that 
\[
\E\prod_{l\leq q}\prod_{i\in C_{l}}s_{i}^{l}=\E\prod_{l\leq q}\sum_{\alpha}w_{\alpha}\prod_{i\in C_{l}}\tanh(g'_{\xi',i}(h_{\alpha})).
\]

\subsection{Proof of \prettyref{thm:spins-equivalent}.}

We now turn to proving \prettyref{thm:spins-equivalent}. We begin
with the following two lemmas.
\begin{lem}
\label{lem:equiv-dist} Let $h_{\alpha}$,\, $\eta_{\alpha}$, $g_{\xi},g_{\xi}^{'}$
be as above. We then have the following equalities in distribution
\begin{align*}
\left(g(h_{\alpha})\right)_{\alpha} & \eqdist\left(B_{q_{*}}(h_{\alpha})\right)_{\alpha}\\
\left(g_{\xi'}(h_{\alpha})\right)_{\alpha} & \eqdist\left(Y_{q_{*}}(h_{\alpha})\right)_{\alpha}\\
\left(g_{\xi'}'(h_{\alpha})\right)_{\alpha} & \eqdist\left(X_{q_{*}}(h_{\alpha})\right)_{\alpha}.
\end{align*}
\end{lem}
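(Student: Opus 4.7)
The plan is to treat the three equalities in order of increasing difficulty. The first two are statements about centered Gaussian processes indexed by $\cA_r$ and reduce to a covariance computation; the third requires a Cole--Hopf/Girsanov argument identifying the recursive tilt defining the $\eta'_\alpha$ with the drift in \eqref{eq:local-field-process}.

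For items (1) and (2), I would show that both sides are centered Gaussians (after subtracting the constant $h$) with the same covariance. Using Panchenko's explicit version $g_{\xi'}(h_\alpha)=\sum_{\beta\precsim\alpha}\eta_\beta(\xi'(q_{|\beta|})-\xi'(q_{|\beta|-1}))^{1/2}$ and independence of the $\eta_\beta$, I obtain $\mathrm{Cov}(g_{\xi'}(h_\alpha),g_{\xi'}(h_\beta))=\xi'(q_{|\alpha\wedge\beta|})$ (taking $q_0=0$). On the SDE side, from \eqref{eq:cav-field-proc} and \eqref{eq:Bromw}, together with $(h_\alpha,h_\beta)=q_{|\alpha\wedge\beta|}\leq q_*$, I get $\mathrm{Cov}(Y_{q_*}(h_\alpha),Y_{q_*}(h_\beta))=\int_0^{q_{|\alpha\wedge\beta|}}\xi''(s)\,ds=\xi'(q_{|\alpha\wedge\beta|})$. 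Item (1) is the analogous computation after replacing $\xi'$ by the identity.

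For item (3), the key input is to identify the functions $Z_p$ in \eqref{eq:Z-def} with the Parisi PDE solution $u$ at finite RSB. Setting $X_p(\vec\eta)=h+\sum_{i\leq p}\eta_i(\xi'(q_i)-\xi'(q_{i-1}))^{1/2}$, I would verify by backward induction starting from $Z_r=\log\cosh=u(1,\cdot)$ that $Z_p(\vec\eta)=u(q_p,X_p(\vec\eta))$. This is the classical Cole--Hopf linearization: on each interval $[q_p,q_{p+1}]$ the measure $\zeta$ is constant in weight, so \eqref{eq:ParisiIVP} becomes a linear backward heat equation for $v=\exp(\zeta([0,q_p])u)$, and the recursion \eqref{eq:Z-def} is exactly convolution against the corresponding Gaussian kernel. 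Once this is in place, the transition kernel $K_p$ reads $\exp\bigl(\zeta([0,q_p])(u(q_{p+1},X_{p+1})-u(q_p,X_p))\bigr)d\gamma(x_{p+1})$, which is precisely the Girsanov density converting the $Y$-dynamics on $[q_p,q_{p+1}]$ into the $X$-dynamics with drift $\xi''(t)\zeta([0,t])u_x(t,X_t)\,dt$, since $\zeta([0,t])$ is constant there. An induction on the tree depth then propagates the equality: conditional on the history up to level $p$, the driving process $B_t$ produces independent Brownian increments along distinct children (it branches exactly at the overlap times $q_{|\alpha\wedge\beta|}$), which matches the independence of $\eta'_{\alpha'}$ across children $\alpha'$ given the ancestor. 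Iterating yields the full joint equality $\eqdist$.

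The main obstacle will be the Cole--Hopf/Girsanov identification: one must check carefully that the discrete tilt factor $\zeta([0,q_p])$ coincides with the continuous drift weight obtained by integrating $\xi''(t)\zeta([0,t])$ over $[q_p,q_{p+1}]$, and that the boundary term $u(q_{p+1},\cdot)-u(q_p,\cdot)$ produced by the linearization matches the It\^o correction in Girsanov. Modulo this bookkeeping, the branching structure of $B_t$ and the tree-indexed recursion glue together cleanly, and the three equalities follow from the finite-RSB statement by standard approximation if the paper requires general $\zeta$.
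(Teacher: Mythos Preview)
Your proposal is correct and follows essentially the same strategy as the paper: covariance/increment matching for the first two equalities, and the Cole--Hopf identification $Z_p=u(q_p,\cdot)$ together with Girsanov's theorem for the third. The paper phrases the first two via the increment identity $(\eta_\alpha)\eqdist(B_{q_{|\alpha|}}(h_\alpha)-B_{q_{|\alpha|-1}}(h_\alpha))$ rather than a direct covariance computation, and its treatment of the joint law across different $\alpha$ in the third item is terser than your explicit branching-induction, but the underlying argument is the same; note also that the lemma is stated only for atomic $\zeta$, so the approximation remark at the end of your proposal is unnecessary here (it is handled later in the proof of Theorem~\ref{thm:spins-equivalent}).
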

\begin{proof}
Observe by the independent increments property of Brownian motion,
we have that 
\[
(\eta_{\alpha})\eqdist(B(q_{\abs{\alpha}},h_{\alpha})-B(q_{\abs{\alpha}-1},h_{\alpha})).
\]
This yields the first two equalities. It remains to see the last equality. 

To this end, fix $h_{\alpha}$, and consider the process $X_{t}$
thats solves the SDE \prettyref{eq:local-field-process}. Then if
$Y_{t}$ is distributed like $Y$ as above with respect to some measure
$Q$ , then by Girsanov's theorem \cite[Lemma 8.3.1]{JagTobPD15},
we have that with respect to the measure $P$ with Radon-Nikodym derivative
\[
\frac{dP}{dQ}(t)=e^{\int_{0}^{t}\zeta\left([0,s]\right)du_{s}},
\]
the process $Y_{t}$ has the same law as $X_{t}$. In particular,
for the finite collection of times $q_{i}$ we have that 
\begin{align*}
\E_{P}F(X_{q_{0}}\ldots,X_{q_{r}}) & =\int F(Y_{q_{0}},\ldots,Y_{q_{r}})e^{\int_{0}^{t}\zeta du(s,Y_{s})}dQ(Y)\\
 & =\int F(Y_{t_{1}},\ldots,Y_{t_{k}})\prod_{i=0}^{k}e^{\zeta\left([0,q_{k}]\right)\left(u(q_{k},Y_{q_{k}})-u(q_{k-1},Y_{q_{k-1}})\right)}dQ.
\end{align*}
By recognizing the law of $(B_{q_{k}})$ and $(Y_{q_{k}})$ as Gaussian
random variables, and \prettyref{eq:Z-def} as the Cole-Hopf solution
of the Parisi IVP \prettyref{eq:ParisiIVP}, $u(q_{k},x)=Z_{k}(x)$,
the result follows. 
\end{proof}
We now need the following continuity theorem. This is intimately related
to continuity results commonly used in the literature, though the
method of proof is different. 

Let $\mathcal{Q}_{d}$ denote the set of $d\times d$ matrices of
the form 
\[
\mathcal{Q}_{d}=\{(q_{ij})_{i,j\in[d]}:\:q_{ij}\in[0,1],\:q_{ij}=q_{jk},\:q_{ij}\geq q_{ik}\wedge q_{kj}\:\forall i,j,k\}.
\]
Note that this set is a compact subset of $\R^{d^{2}}$. Consider
the space $\Pr([0,1])$ equipped with the weak-{*} topology. Then
the product space $\Pr([0,1])\times\mathcal{Q}_{d}$ is compact Polish.
For any $Q\in\cQ_{d}$, let $(\sigma^{i}(Q))_{i=1}^{d}\subset\cH$
be a collection of vectors whose gram-matrix is $Q$. We can then
define the functional 
\[
\mathcal{R}(\zeta,Q)=\E\prod_{i=1}^{d}u_{x}(q_{*},X_{q_{*}}(\sigma^{i})).
\]

\begin{lem}
\label{lem:R-continuous}We have that $\mathcal{R}$ is well-defined
and is jointly continuous. 
\end{lem}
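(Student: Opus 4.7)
My plan is to establish joint continuity by a coupling-plus-SDE-stability argument, leveraging the known regularity theory for the Parisi PDE. First, to see that $\mathcal{R}$ is well-defined, I would invoke the maximum principle applied to the Parisi PDE (see \cite{JagTobSC16}): the solution $u^\zeta$ inherits from its terminal condition $u(1,x)=\log\cosh(x)$ the pointwise bound $|u_x^\zeta(t,x)|\leq 1$ uniformly in $(t,x)\in[0,1]\times\mathbb{R}$ and in $\zeta$. Hence the integrand defining $\mathcal{R}$ is bounded by $1$ and $|\mathcal{R}(\zeta,Q)|\leq 1$.

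For joint continuity, take a convergent sequence $(\zeta_n,Q_n)\to(\zeta,Q)$ in $\Pr([0,1])\times\mathcal{Q}_d$ and construct coupled versions of the driving processes on a common probability space. The process $(B_t^n(\sigma^i(Q_n)))_{t\in[0,q_*],\,i\in[d]}$ is a centered, continuous Gaussian process in $C([0,q_*])^d$ whose covariance $(t\wedge s)\wedge q_{ij}^n$ converges pointwise to $(t\wedge s)\wedge q_{ij}$. Since each marginal remains a standard Brownian motion with $\mathbb{E}|B_t^n(\sigma^i)-B_s^n(\sigma^i)|^4=3(t-s)^2$, Kolmogorov's criterion yields uniform tightness of $B^n$ in $C([0,q_*])^d$, and convergence of finite-dimensional distributions (from pointwise convergence of the covariance) upgrades weak subsequential limits to $B^n\Rightarrow B$. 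Via Skorokhod's representation theorem I realize this convergence almost surely, and It\^o's isometry (together with continuity of $\xi''$ on $[0,q_*]$) promotes it to $Y^n\to Y$ a.s.\ in $C([0,q_*])^d$.

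Next, I transfer this convergence from $Y^n$ to $X^n$ via SDE stability. By \cite{JagTobSC16}, $\zeta\mapsto u^\zeta$ is continuous in the weak-$*$ topology in the sense that $u_x^{\zeta_n}\to u_x^\zeta$ locally uniformly on $[0,1]\times\mathbb{R}$, with uniform $L^\infty$ bound and a uniform Lipschitz-in-$x$ estimate. The drift in the local-field SDE is $b_n(t,x)=\xi''(t)\,\zeta_n([0,t])\,u_x^{\zeta_n}(t,x)$; although $t\mapsto\zeta_n([0,t])$ converges to $t\mapsto\zeta([0,t])$ only at continuity points of the latter, its uniform bound by $1$ together with the uniform convergence of $u_x^{\zeta_n}$ and dominated convergence yields $\int_0^{q_*}|b_n(t,x)-b(t,x)|\,dt\to 0$ locally uniformly in $x$. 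Subtracting the integral equations for $X^n$ and $X$ and applying the uniform Lipschitz bound in $x$, a Gronwall argument gives $X^n(\sigma^i)\to X(\sigma^i)$ a.s.\ in $C([0,q_*])$ for each $i\in[d]$.

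Finally, combining the a.s.\ convergence $X_{q_*}^n(\sigma^i)\to X_{q_*}(\sigma^i)$ with the locally uniform convergence $u_x^{\zeta_n}\to u_x^\zeta$, one obtains $u_x^{\zeta_n}(q_*,X_{q_*}^n(\sigma^i))\to u_x^\zeta(q_*,X_{q_*}(\sigma^i))$ almost surely for each $i$. Since the resulting products are bounded by $1$, dominated convergence delivers $\mathcal{R}(\zeta_n,Q_n)\to\mathcal{R}(\zeta,Q)$. The main technical point is the mild discontinuity of $\zeta\mapsto\zeta([0,\cdot])$ at atoms of $\zeta$, but this is harmless here: the CDF enters only through an integral in the SDE drift, where $L^1_t$-convergence of the integrand, which always holds by dominated convergence on this bounded, a.e.\ convergent family, is exactly what is needed.
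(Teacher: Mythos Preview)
Your continuity argument is correct but takes a different route from the paper's. The paper notes that the coefficients of the generator $L^{lf}$ from \eqref{eq:inf-gen-loc-field}, namely $a_{ij}(t)=\mathbb{1}_{t\leq q_{ij}}$ and $b_i(t,x)=\xi''(t)\zeta([0,t])u_x^\zeta(t,x)$, satisfy
\[
\int_0^t\bigl(|a_{ij}^r(s)-a_{ij}(s)|+\sup_x|b^r(s,x)-b(s,x)|\bigr)\,ds\to 0
\]
(using uniform convergence $u_x^r\to u_x$ from \cite{AuffChenPM15}), and then invokes Stroock--Varadhan \cite[Theorem~11.1.4]{stroock1979multidimensional} to conclude that the laws of the diffusions converge; continuity of $\mathcal{R}$ follows because $\prod_i\tanh(x_i)$ is bounded continuous. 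Your coupling-plus-Gronwall approach is a legitimate, more hands-on alternative: it avoids the martingale-problem machinery at the cost of carrying out a Skorokhod embedding, a uniform-in-$\zeta$ Lipschitz bound on the drift (equivalently, a uniform bound on $u_{xx}^\zeta$, which is available), and the passage from $B^n$ to $Y^n$. Both routes rest on the same coefficient estimate; the paper simply packages the conclusion via a single citation. One small correction: It\^o's isometry gives $L^2$ rather than almost-sure convergence of $Y^n$; what you actually want is pathwise integration by parts against the deterministic, finite-variation integrand $\sqrt{\xi''}$, which does promote $B^n\to B$ in $C$ to $Y^n\to Y$ in $C$ almost surely.

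There is, however, a gap in your treatment of ``well-defined.'' The issue is not boundedness of the integrand---that is immediate from $|u_x|\leq 1$---but that $\mathcal{R}(\zeta,Q)$ is declared via an arbitrary \emph{choice} of vectors $(\sigma^i(Q))_{i\leq d}\subset\mathcal{H}$ realizing the Gram matrix $Q$, and one must verify the value is independent of that choice. The paper handles this in one line: the generator $L^{lf}$ depends on the $\sigma^i$ only through the indicators $\mathbb{1}_{t\leq(\sigma^i,\sigma^j)}=\mathbb{1}_{t\leq q_{ij}}$, so the law of $(X_t(\sigma^i))_{i\leq d}$, and hence $\mathcal{R}$, is determined by $Q$ alone. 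You should add this observation.
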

\begin{proof}
Let $(\sigma^{i})_{i=1}^{d}$ be any collection with overlap matrix
$Q$. Recall the infinitesimal generator, $L^{lf}$, of the collection
$\left(X_{t}(\sigma^{i})\right)$ from \prettyref{eq:inf-gen-loc-field}.
Observe that $L^{lf}$ depends on $(\sigma^{i})$ only through their
overlap matrix, which is $Q$. Thus the law is determined by this
matrix and $\cR$ is well-defined. 

We now turn to proving continuity. As $\Pr([0,1])\times\mathcal{Q}$
is compact Polish, it suffices to show that for $\zeta_{r}\to\zeta$
and $Q^{r}=(q_{ij}^{r})$ with $q_{ij}^{r}\to q_{ij}$, $1\leq i,j\leq l$,
\[
\mathcal{R}(\zeta_{r},Q^{r})\to\mathcal{R}(\zeta,Q),
\]
 as $r\to\infty.$ 

Let $a_{ij}^{r}$ and $b_{i}^{r}$ be the coefficients of the diffusion
associated to the local field process $X^{\zeta_{r},Q^{r}}$. By \prettyref{eq:inf-gen-loc-field},
we have 
\[
a_{ij}^{r}(t)=\mathbf{1}_{\{t\leq q_{ij}^{r}\}},\quad b_{i}^{r}(t,\cdot)=\xi''\zeta_{r}u_{x}^{r}(t,\cdot),
\]
where $u^{r}$ is the solution to the Parisi initial value problem
corresponding to $\zeta_{r}$. These coefficients are all uniformly
bounded, measurable in time and smooth in space. Furthermore, $\xi$
is continuous, so that 
\begin{align}
\int_{0}^{t}\big(|a_{ij}^{r}(s)-a_{ij}(s)|+\sup_{x}&|b^{r}(s,x)-b(s,x)|\big)ds\nonumber \\
 &   \leq|q_{ij}^{r}-q_{ij}|+\int_{0}^{t}\sup_{x}|\zeta_{r}\left([0,s]\right)u_{x}^{_{r}}(t,x)-\zeta\left([0,s]\right)u_{x}(t,x)|ds\to0\label{eq:SVestimate}
\end{align}
as $r\to\infty$ since $u_{x}^{r}$ converges uniformly to $u_{x}$
by \cite[Prop. 1]{AuffChenPM15} as $\zeta_{r}\to\zeta$.

By Stroock-Varadhan's theorem \cite[Theorem 11.1.4]{stroock1979multidimensional}, 
the convergence from \prettyref{eq:SVestimate} implies that the
laws of the solutions to the corresponding martingale problems converge.
As $(x_{1},\ldots,x_{d})\mapsto\prod_{i=1}^{d}\tanh(x_{i})$ is a
continuous bounded function we obtain the continuity of $F$. 
\end{proof}
We may now turn to the proof of the main theorem of this section. 
\begin{proof}[\textbf{\emph{Proof of \prettyref{thm:spins-equivalent}}}]
 Suppose first that $\zeta$ consists of $r+1$ atoms. In this setting
the result has already been proved by the aforementioned results of
Panchenko combined with \prettyref{lem:equiv-dist}. The main task
is to prove these results for general $\zeta$. To this end, let $\zeta_{r}\to\zeta$
be atomic. Denote the spins corresponding to these measures by $s_{i,r}^{l}$. 

Correspondingly, for any collection of moments we have 
\[
\E\prod_{l\leq q}\prod_{i\in C_{l}}s_{i,r}^{l}=\E\left\langle \cR(\zeta_{r},Q)\right\rangle .
\]
Recall that the overlap distribution converges in law when $\zeta_{r}\to\zeta$,
thus by \prettyref{lem:R-continuous} and a standard argument, 
\[
\E\left\langle \cR(\zeta_{r},Q)\right\rangle _{r}\to\E\left\langle \cR(\zeta,Q)\right\rangle =\E\left\langle \prod_{i\leq q}\prod_{i\in C_{l}}\tanh(X_{q_{*}}^{i}(\sigma^{l}))\right\rangle .
\]
However, as the overlap distribution determines the spin distribution,
we see that 
\[
\E\prod_{l\leq q}\prod_{i\in C_{l}}s_{i,r}^{l}\to\E\prod_{l\leq q}\prod_{i\in C_{l}}s_{i}^{l}=\E\frac{\left\langle \prod_{l\leq q}\prod_{i\in C_{l}}\tanh(Y_{q_{*}}^{i}(\sigma^{l}))\prod_{i\leq n}\cosh(Y_{q_{*}}^{i}(\sigma^{l}))\right\rangle }{\left\langle \prod_{i\leq n}\cosh(Y_{q_{*}}^{i}(\sigma^{l}))\right\rangle ^{q}}.
\]
This yields both results.
\end{proof}

\section{Proof of  Theorem \ref{thm:multi-TAP}} We now prove that the TAP equation holds at infinite particle number.
Before stating this proof we point out two well-known \cite{AuffChenSC15,JagTobPD15} but useful facts: the magnetization process $u_x(s,X_s(\sigma))$ is a martingale for fixed $\sigma$ and $u_x(t,x)=\tanh(x)$ for $t\geq q_*=sup\text{ supp } \zeta$.

\begin{proof}[\textbf{\emph{Proof of \prettyref{thm:multi-TAP}}}]
 Consider $\left\langle s\right\rangle _{\alpha}$,
if we compute the joint moments of this expectation 
\[
\left\langle s\right\rangle _{\alpha}^{k}=u_{x}(q,X_{q}^{\sigma})^{k}
\]
for any $\sigma\in W_{\alpha}$. In fact, jointly, 
\[
\prod_{\alpha\in A}\left\langle s\right\rangle _{\alpha}^{k_{\alpha}}=\prod_{\alpha\in A}u_{x}(q,X_{q}^{\sigma^{\alpha}})^{k_{\alpha}}
\]
for $\abs{A}<\infty$. Thus in law, 
\begin{equation}\label{eq:hamburger}
\left\langle s\right\rangle _{\alpha}=u_{x}(q,X_{q}^{\sigma^{\alpha}}).
\end{equation}
By a similar argument 
\[
\left\langle y\right\rangle _{\alpha}=\E\left(X_{1}^{\sigma^{\alpha}}\vert\cF_{q}\right),
\]
where $\cF_{q}$ is the sigma algebra of $\sigma((B_{q}^{\alpha}(\sigma))_{\sigma\in\supp \mu}).$
However, 
\begin{align*}
\E\left(X_{1}^{\sigma^{\alpha}}\vert \cF_{q}\right) & =X_{q}^{\sigma^{\alpha}}+\int_{q}^{1}\xi''(s)\zeta([0,s])\E\left( u_{x}(s,X_{s}^{\sigma^{\alpha}})\vert\cF_{q}\right)ds\\
 & =X_{q}^{\sigma^{\alpha}}+\int_{q}^{1}\xi''(s)\zeta([0,s])ds\cdot u_{x}(q,X_{q}^{\sigma^{\alpha}})\\
 & =X_{q}^{\sigma^{\alpha}}+\int_{q}^{1}\xi''(s)\zeta([0,s])ds\cdot\left\langle s\right\rangle _{\alpha}
\end{align*}
where the first line is by definition, \eqref{eq:local-field-process}, of $X^\sigma$, and the
second line follows from the martingale property of the magnetization
process. Solving this for $X_{q}^{\sigma^{\alpha}}$ yields,
\[
X_{q}^{\sigma^{\alpha}}=\gibbs{y}_\alpha - \int_{q}^{1}\xi''(s)\zeta([0,s])ds\cdot\left\langle s\right\rangle _{\alpha}.
\]
Combining this with \eqref{eq:hamburger}, yields the result. 
\end{proof}

\section{Evaluation of spin statistics\label{sec:Examples}}
Using spin distributions, one can obtain
formulae for expectations of products of spins, either through the
directing function $\sigma$ or by taking limits of expressions using
Ruelle cascades. The goal of this section is to explain how one can
obtain expressions for such statistics as the solutions of certain partial differential
equations. The input required will be the overlap distribution
$\zeta(t)$. In particular, one can in principle evaluate these expression
using standard methods from PDEs or numerically. Rather than developing
a complete calculus of spin statistics, we aim to give a few illustrative
examples. 

At the heart of these calculations is the following key observation:
the magnetization process for any finite collection $(\sigma^{i})_{i=1}^{n}$
is a family of branching martingales whose independence properties
mimics that of the tree encoding of their overlap arrays. (This can
be formalized using the language of Branchingales. See \cite{Aus15}
for more on this.) In this section we focus on two examples: two spin
statistics, i.e., the overlap, and three spin statistics. One can
of course write out general formulas, however, we believe that these
two cases highlight the key ideas. In particular, the second case
is the main example in \cite{MV85}, where this is calculated using
replica theory. The reader is encouraged to compare the PDE and martingale
based discussion here with the notion of ``tree operators'' in that
paper. For the remainder of this subsection, all state measures should
be taken with boundary data $f(\sigma)=X_{q_{*}}(\sigma)$.

\subsection{Two Spin Statistics}

We first aim to study two spin statistics. As the spins take values
$\pm1$, there is only one nontrivial two spin statistic, namely $\E s_{1}^{1}s_{1}^{2}$
where the subscript denotes the site index and the superscript denotes
the replica index. Observe that by \prettyref{eq:loc-av-data-x},
we have that 
\begin{align*}
\E s_{1}^{1}s_{1}^{2} & =\E\int\left\langle s\right\rangle _{\sigma^{1}}\cdot\left\langle s\right\rangle _{\sigma^{2}}d\mu^{\tensor2}
  =\E\int\E\prod_{i=1}^{2}u_{x}(q_{*},X_{q_{*}}^{\sigma^{i}})d\mu^{\tensor2}.
\end{align*}
Observe that it suffices to compute $\E u_{x}(q_{*},X_{q_{*}}^{\sigma^{1}})u_{x}(q_{*},X_{q_{*}}^{\sigma^{2}}).$
There are a few natural ways to compute this. Let $q_{12}=(\sigma^{1},\sigma^{2})$.
One method is to observe that if $\Phi=\Phi_{q_{12}}$ solves 
\[
\begin{cases}
(\partial_{t}+L_{t}^{lf})\Phi=0 & [0,1]\times\R^{2}\\
\Phi(1,x,y)=\tanh(x)\tanh(y)
\end{cases},
\]
where $L^{lf}$ is the infinitesimal generator for the local field
process (see \prettyref{eq:inf-gen-loc-field}), then 
\[
\E u_{x}(q_{*},X_{q_{*}}^{\sigma^{1}})u_{x}(q_{*},X_{q_{*}}^{\sigma^{2}})=\Phi_{q_{12}}(0,h).
\]
One can study this problem using PDE methods or Ito's lemma. This
yields the expression 
\[
\E s_{1}^{1}s_{1}^{2}=\int\Phi_{s}(0,h)d\zeta(s).
\]
Alternatively, note that, by the branching martingale property of
the magnetization process, we have that
\[
\E u_{x}(q_{*},X_{q_{*}}^{\sigma^{1}})u_{x}(q_{*},X_{q_{*}}^{\sigma^{2}})=\E u_{x}(q_{12},X_{q_{12}})^{2},
\]
yielding the alternative expression 
\[
\E s_{1}^{1}s_{1}^{2}=\int\E u_{x}^{2}(s,X_{s})d\zeta(s).
\]
In the case that $\zeta$ is the Parisi measure for $\xi$ (for this
notation see \cite{JagTobPD15,AuffChenPM15}), it is well-known that
on the support of $\zeta$, 
\[
\E u_{x}^{2}(s,X_{s})=s
\]
 so that 
\[
\E s_{1}^{1}s_{1}^{2}=\int sd\zeta(s).
\]
This resolves a question from \cite[Remark 5.5]{BoltSznit98}.

\subsection{Three spin statistics.}

We now turn to computing more complicated statistics. We focus on
the case of the three spin statistic, 
$\E s_{1}^{1}s_{1}^{2}s_{1}^{3},$
as we believe this to be illustrative of the essential ideas and it is the main example give in the paper of M\'ezard-Virasoro \cite{MV85}. 

We say a function $f:[0,1]^{k}\to\R$ is \emph{symmetric} if for every
$\pi\in S_{k}$, 
\[
f(x_{\pi(1)},\ldots,x_{\pi(k)})=f(x_{1},\ldots,x_{k})
\]
In the following, we denote by $dQ(R^{n})$ the law of the overlap
array $R^{n}=(R_{ij})_{ij\in[n]}$. We say that such a function has
\emph{vanishing diagonal} if $f(x,\ldots,x)=0$. We will always assume
that $Q$ satisfies the Ghirlanda-Guerra identities.
Our goal is to prove the following:
\begin{thm}
\label{thm:three-spin-calc}We have that 
\begin{align*}
\E s_{1}^{1}s_{1}^{2}s_{1}^{3} & =\frac{3}{4}\int\int\E u_{x}(b\vee a,X_{b\vee a})^{2}u_{x}(a\wedge a,X_{a\wedge a})d\zeta(a)d\zeta(b).
\end{align*}
\end{thm}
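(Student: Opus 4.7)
The starting point is the representation provided by Theorem \ref{thm:spins-equivalent}. Using that $\langle s\rangle^X_\sigma = u_x(q_*, X_{q_*}(\sigma))$ (which follows from the martingale property of $u_x(t, X_t^\sigma)$ together with the definition of $\mu^X_\sigma$), the three-spin statistic rewrites as
\[
\E s_1^1 s_1^2 s_1^3 \;=\; \E \int \prod_{\ell=1}^3 u_x\bigl(q_*, X_{q_*}^{\sigma^\ell}\bigr)\, d\mu^{\otimes 3}(\sigma^1,\sigma^2,\sigma^3).
\]
The plan is to evaluate this product of three magnetizations by conditioning on the overlap triple $(q_{12}, q_{13}, q_{23})$, collapsing the product using the branching--martingale structure of the local field processes, and then integrating against the joint law of the overlaps using the Ghirlanda--Guerra identities together with Panchenko's ultrametricity theorem. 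This mirrors the two-spin warm-up from the preceding subsection, where the same pattern produced $\int s \, d\zeta(s)$.

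The first step is a conditional identity. By ultrametricity, almost surely either the three overlaps all coincide or there is a unique strictly largest one and the other two are equal. In the representative case $q_{12} = a > b = q_{13} = q_{23}$, the local fields $X^{\sigma^1}$ and $X^{\sigma^2}$ coincide until time $a$ and evolve independently thereafter, while $X^{\sigma^3}$ coincides with them up to time $b$ and is independent of them on $[b,a]$. Iterated conditioning, using the martingale identity $\E\bigl[u_x(q_*, X_{q_*}^\sigma)\mid \cF_t^\sigma\bigr] = u_x(t, X_t^\sigma)$ together with conditional independence across branches, yields
\[
\E\Big[\prod_{\ell=1}^3 u_x\bigl(q_*, X_{q_*}^{\sigma^\ell}\bigr)\,\Big\vert\, q_{12}, q_{13}, q_{23}\Big] \;=\; \E\bigl[u_x(a, X_a)^2\, u_x(b, X_b)\bigr],
\]
with the all-equal case ($a=b$) specializing to $\E u_x(a, X_a)^3$.

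Next I would integrate this conditional expectation against the joint law of the overlap triple. The Ghirlanda--Guerra identity, available in this setting by \prettyref{prop:cavity-equations}, gives the conditional law $q_{13}\mid q_{12}\sim \tfrac{1}{2}\zeta + \tfrac{1}{2}\delta_{q_{12}}$; combined with ultrametricity (which on the strictly--largest event forces $q_{13}=q_{23}$), this shows that the joint density of $(q_{12}, q_{23})$ on $\{a>b\}$ is $\tfrac{1}{2}\, d\zeta(a)\, d\zeta(b)$. The three symmetric cases for which pair is maximal contribute equally by exchangeability of replicas, producing the combinatorial factor $3\cdot\tfrac{1}{2} = \tfrac{3}{2}$ on the off-diagonal integral. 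Repackaging this off-diagonal piece together with the all-equal/diagonal contribution into the symmetric form under $a\leftrightarrow b$ then rearranges the total as the single double integral $\tfrac{3}{4}\int\int \E u_x(a\vee b, X_{a\vee b})^2\, u_x(a\wedge b, X_{a\wedge b})\, d\zeta(a)\, d\zeta(b)$.

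The main delicacy I expect is the treatment of the diagonal/all-equal contribution when $\zeta$ has atoms: there the $\tfrac{1}{2}\delta_{q_{12}}$ mass from Ghirlanda--Guerra interacts non-trivially with the atomic structure of $\zeta$, so the conditional law of $q_{23}$ given $q_{12}=q_{13}$ must be identified explicitly, either through a second application of Ghirlanda--Guerra combined with ultrametricity or by passing to the limit from the finite--level Ruelle cascade formula \eqref{eq:spins-RPC-formula} used in the proof of \prettyref{thm:spins-equivalent}. For atomless $\zeta$ the diagonal of $\zeta^{\otimes 2}$ has measure zero, and the argument reduces immediately to the off-diagonal branching--martingale identity combined with the $3\times\tfrac{1}{2}$ Ghirlanda--Guerra computation.
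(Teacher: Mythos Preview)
Your approach is exactly the paper's: it packages your martingale computation as \prettyref{lem:Three-spin-function-lem} (defining $f(R_{12},R_{13},R_{23})=\cR(\sigma^1,\sigma^2,\sigma^3)$ and showing $f(a,b,b)=\E u_x(b,X_b)^2 u_x(a,X_a)$ for $a\leq b$) and your GG/ultrametricity integration as \prettyref{lem:obvious}, then combines the two. The one point where the paper is cleaner than your outline is the diagonal: rather than tracking the all-equal contribution separately and worrying about atoms of $\zeta$, the paper simply observes that $f$ has \emph{vanishing diagonal}, i.e.\ $f(a,a,a)=\E u_x(a,X_a)^3=0$ (by the odd symmetry $u_x(t,-x)=-u_x(t,x)$ and $X_t\eqdist -X_t$ when $h=0$), so the equilateral term drops out and the factor $\tfrac{3}{4}$ falls directly out of part 3 of \prettyref{lem:obvious} with no atomic case analysis needed.
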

As a starting point, again observe that from the properties of state
measures, \prettyref{eq:pure-state},
\begin{align*}
\E s_{1}^{1}s_{1}^{2}s_{1}^{3} & =\E\int\left\langle s\right\rangle _{\sigma^{1}}\cdot\left\langle s\right\rangle _{\sigma^{2}}\cdot\left\langle s\right\rangle _{\sigma^{3}}d\mu^{\tensor3}.
\end{align*}
Denote the integrand by 
\[
\cR(\sigma^{1},\sigma^{2},\sigma^{3})=\left\langle s\right\rangle _{\sigma^{1}}\cdot\left\langle s\right\rangle _{\sigma^{2}}\cdot\left\langle s\right\rangle _{\sigma^{3}}.
\]
 The proof of this result will follow from the following two lemmas.
\begin{lem}
\label{lem:obvious}We have the following.

\begin{enumerate}
\item Suppose that $g(x,y)$ is a continuous, symmetric function. Then 
\[
\int g(R_{12},R_{13})dQ=\frac{1}{2}\int\int g(x,y)d\zeta(x)d\zeta(y)+\int g(x,x)d\zeta(x).
\]
\item Suppose that $f(x,y,z)$ is a continuous symmetric function with vanishing
diagonal. Then 
\[
\int f(R_{12},R_{13},R_{23})dQ=\frac{3}{2}\int f(R_{12}\vee R_{13},R_{12}\wedge R_{13},R_{12}\wedge R_{13})dQ.
\]
\item Suppose that $f(x,y,z)$ is as above and such that $h(x,y)=f(x\vee y,x\wedge y,x\wedge y)$
is continuous. Then 
\[
\int f(R_{12},R_{13},R_{23})dQ=\frac{3}{4}\int\int h(x,y)d\zeta(x)d\zeta(y).
\]
\end{enumerate}
\end{lem}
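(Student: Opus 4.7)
The plan is to prove the three parts in sequence, since each builds on the preceding one.

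For part (1), the idea is to condition on the overlap $R_{12}$ and invoke the Ghirlanda--Guerra identities (which are assumed for $Q$). A standard consequence of these identities, applied with two replicas, is that the conditional law of $R_{13}$ given the $\sigma$-algebra generated by $R_{12}$ is an explicit mixture of the global overlap law $\zeta$ and the point mass $\delta_{R_{12}}$. Substituting this mixture into the inner expectation $\E[g(R_{12},R_{13})\mid R_{12}]$ and then integrating, using the symmetry of $g$, produces a sum of a product integral against $d\zeta\otimes d\zeta$ and a diagonal integral against $d\zeta$.

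For part (2), the two key inputs are ultrametricity of $Q$ and the exchangeability of replicas. Ultrametricity forces the triple $(R_{12},R_{13},R_{23})$ to be an isoceles triangle: either all three overlaps are equal, or two of them equal a smaller value $a$ and the third equals a larger value $b>a$. I would decompose the support of $Q$ into four types, the degenerate ``all equal'' type and the three non-degenerate types distinguished by which of $R_{12}$, $R_{13}$, $R_{23}$ plays the role of the unique large overlap. Exchangeability of replicas forces these three non-degenerate types to have identical distributions under $Q$, and the symmetry of $f$ makes it assume the same value $f(b,a,a)$ on each. On the other hand, in the rearranged integrand $f(R_{12}\vee R_{13},R_{12}\wedge R_{13},R_{12}\wedge R_{13})$, the type in which $R_{23}$ is the large overlap forces $R_{12}\vee R_{13}=R_{12}\wedge R_{13}$, reducing the integrand to $f(x,x,x)$, which vanishes by hypothesis. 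Hence the rearranged integral picks up only two of the three equal non-degenerate contributions, and the ratio is $3/2$.

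Part (3) is then a short combination. Set $h(x,y)=f(x\vee y,x\wedge y,x\wedge y)$; it is continuous by assumption and manifestly symmetric in $(x,y)$. Applying (1) to $h$, the diagonal term $h(x,x)=f(x,x,x)$ vanishes by the vanishing-diagonal assumption on $f$, leaving only the product-integral term $\tfrac{1}{2}\iint h(x,y)\,d\zeta(x)d\zeta(y)$. Chaining this with (2) gives the advertised $\tfrac{3}{2}\cdot\tfrac{1}{2}=\tfrac{3}{4}$.

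The main obstacle is the case analysis in part (2). One must carefully isolate the boundary situation $R_{12}=R_{13}$, in which ultrametricity still allows $R_{23}$ to be strictly larger, and observe that the rearrangement conflates this subcase with the fully diagonal case while the original integrand does not; it is precisely the vanishing-diagonal hypothesis on $f$ that makes the tilted integral lose exactly one of the three symmetric contributions. Keeping the bookkeeping of the three configurations straight, and verifying that exchangeability under $Q$ distributes equal mass to each, is where the argument requires care.
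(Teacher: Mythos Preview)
Your proposal is correct and follows essentially the same route as the paper's proof: part (1) via the Ghirlanda--Guerra identities, part (2) via the ultrametric/exchangeability decomposition into isoceles types (with the vanishing-diagonal hypothesis killing the contribution from the type where $R_{23}$ is the large overlap in the rearranged integrand), and part (3) by chaining the two. Your case analysis in part (2) is in fact spelled out in more detail than the paper's, which simply records $\int f\,dQ = 3\int_{R_{12}>R_{13}} f(R_{12},R_{13},R_{13})\,dQ$ by symmetry and ultrametricity and then observes this equals $\tfrac{3}{2}\int h(R_{12},R_{13})\,dQ$ using the vanishing diagonal once more.
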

\begin{proof}
The first claim follows immediately from the Ghirlanda--Guerra identities.
The last item is implied by the first two. It remains to prove the second
claim. By symmetry of $f$ and ultrametricity we have that
\begin{align*}
\int f(R_{12},R_{13},R_{23})dQ & =3\int_{R_{12}>R_{13}}f(R_{12},R_{13},R_{13})dQ+\int_{R_{12}=R_{13}=R_{23}}f(R_{12},R_{12},R_{12})dQ
\end{align*}
The second term is zero by the vanishing diagonal property of $f$,
so that,
\[
RHS=3\int_{R_{12}\geq R_{13}}f(R_{12},R_{13},R_{13})dQ=\frac{3}{2}\int h(R_{12},R_{13})dQ,
\]
using again the vanishing diagonal property and the definition of $h$. 
\end{proof}
\begin{lem}
\label{lem:Three-spin-function-lem}There is a continuous, symmetric
function of three variables defined on the set of ultrametric $[0,1]^{3}$
such that the function $\cR(\sigma^{1},\sigma^{2},\sigma^{3})=f(R_{12},R_{13},R_{23})$.
This function has vanishing diagonal, and satisfies 
\begin{equation}
f(a,b,b)=\E u_{x}(b,X_{b})^{2}u_{x}(a,X_{a})\label{eq:three-spin-function}
\end{equation}
 for $a\leq b$.
\end{lem}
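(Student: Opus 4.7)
The plan is to reduce the claim to the branching--martingale structure of the magnetization process $M_t(\sigma)=u_x(t,X_t(\sigma))$.

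First, I would identify $\langle s\rangle_\sigma^X$ explicitly. The density $p(s,y;f)\propto e^{sy}e^{-(y-f)^2/(2(\xi'(1)-\xi'(q_*)))}$ factors into a spin weight and a shifted Gaussian, and the elementary Gaussian integral gives $\langle s\rangle_\sigma^X=\tanh(X_{q_*}(\sigma))$. Since $\zeta$ is supported in $[0,q_*]$, the Parisi PDE becomes linear in Cole--Hopf form on $(q_*,1)$, so $u_x(q_*,x)=\tanh(x)$ (this is the fact recalled at the start of the proof of \prettyref{thm:multi-TAP}). Hence $\cR(\sigma^1,\sigma^2,\sigma^3)=\prod_{i=1}^{3}u_x(q_*,X_{q_*}(\sigma^i))$.

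Next, to get the factorization through the overlaps, I would reuse the observation from the proof of \prettyref{lem:R-continuous}: the generator of $(X_t(\sigma^i))_{i=1,2,3}$ depends on the $\sigma^i$ only through their Gram matrix $(R_{ij})$, so after taking expectation over the driving environment, $\cR$ is a function $f$ of $(R_{12},R_{13},R_{23})$. Symmetry is built into the product, and continuity in the overlap arguments follows from the Stroock--Varadhan argument of \prettyref{lem:R-continuous} applied to the bounded continuous functional $(x_1,x_2,x_3)\mapsto\prod_i \tanh(x_i)$.

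Third, I would compute $f(b,a,a)$ for $a\leq b$ using the tree structure of $B$: by \eqref{eq:Bromw}, $B_t(\sigma^1)=B_t(\sigma^2)=B_t(\sigma^3)$ for $t\leq a$, then $B_\cdot(\sigma^3)$ branches off at $t=a$, while $B_\cdot(\sigma^1)=B_\cdot(\sigma^2)$ continue together until $t=b$ and then branch. The same structure is inherited by $X_t(\sigma^i)$. Let $\cF_t$ denote the natural filtration of the common trunk up to time $t$. Conditioning on $\cF_b$, the post-$b$ trajectories of $X^1$ and $X^2$ are independent and both start at $X_b$; the martingale property of $M_t$ gives $\E[u_x(q_*,X^1_{q_*})u_x(q_*,X^2_{q_*})\mid\cF_b]=u_x(b,X_b)^{2}$. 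Conditioning further on $\cF_a$, the post-$a$ trajectory of $X^3$ is independent of the $\{X^1,X^2\}$ subtree, and the martingale property gives $\E[u_x(q_*,X^3_{q_*})\mid\cF_a]=u_x(a,X_a)$. The tower property then yields $f(b,a,a)=\E u_x(b,X_b)^{2}u_x(a,X_a)$.

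The vanishing diagonal $f(x,x,x)=0$ is the point I expect to be the main obstacle, since the formula just derived specializes to $f(x,x,x)=\E u_x(x,X_x)^{3}$, which is not obviously zero. The natural argument is a symmetry one: in the zero-field case $h=0$, the terminal datum $\log\cosh(x)$ is even, so $u(t,\cdot)$ is even and $u_x(t,\cdot)$ is odd; the SDE \eqref{eq:local-field-process} with $X_0=0$ then has an odd drift and symmetric noise, forcing $X_x$ to be symmetrically distributed so that the odd cubic moment vanishes. For nonzero external field one would have to either restrict the statement, redefine $f$ on the diagonal, or invoke Ghirlanda--Guerra to show that the diagonal set $\{R_{12}=R_{13}=R_{23}\}$ makes no contribution to $\int f\,dQ$ in the downstream application to \prettyref{thm:three-spin-calc}.
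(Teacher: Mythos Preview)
Your argument is correct and is essentially the paper's own proof: identify $\langle s\rangle_\sigma$ with $u_x(q_*,X_{q_*}(\sigma))$, observe via the generator \eqref{eq:inf-gen-loc-field} that the expectation depends only on the overlap matrix, and then peel off factors using the branching structure and the martingale property of $M_t$. The paper does exactly this chain of conditionings (first at $b$, then at $a$), only more tersely.

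On the vanishing diagonal: you are right to flag it, and in fact the paper's proof does not address it either---it simply asserts ``it suffices to show \eqref{eq:three-spin-function}.'' Your parity argument is the correct justification in the paper's setting: the Hamiltonian \eqref{eq:ham-def} has no external field, so one should read $h=0$; then $u(t,\cdot)$ is even, $u_x(t,\cdot)$ is odd, the drift in \eqref{eq:local-field-process} is odd, and $X_t\eqdist -X_t$, forcing $\E u_x(x,X_x)^3=0$. Your hedging about $h\neq0$ is unnecessary here, since that case is outside the scope of the paper's model; you can state the parity argument cleanly and move on. (Incidentally, the displayed formula in the lemma should read $f(b,a,a)$ rather than $f(a,b,b)$, since the triple with two large entries and one small one is not ultrametric; the paper's proof and your computation both use the correct configuration.)
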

\begin{rem}
This is to be compared with \cite[Eq. 34]{MV85}.
\end{rem}
\begin{proof}
That it is a continuous, symmetric function of the overlaps is obvious.
It suffices to show \prettyref{eq:three-spin-function}. To this end,
observe that without loss of generality $R_{12}\geq R_{13}=R_{23}$.
In this case, denoting $R_{12}=b$ and $R_{23}=R_{13}=a$, we have
that
\begin{align*}
\cR(\sigma^{1},\sigma^{2},\sigma^{3}) & =\E u_{x}(1,X^{\sigma^{1}})u_{x}(1,X^{\sigma^{2}})u_{x}(1,X^{\sigma^{3}})\\
 & =\E u_{x}(b,X_{b}^{1})u_{x}(b,X_{b}^{2})u_{x}(b,X_{b}^{3})\\
 & =\E u_{x}(b,X_{b}^{1})^{2}u_{x}(b,X_{b}^{3})\\
 & =\E u_{x}(b,X_{b}^{1})^{2}u_{x}(a,X_{a}^{3})\\
 & =\E u_{x}(b,X_{b})^{2}u_{x}(a,X_{a}).
\end{align*}
In the second line, we used independence and the martingale property.
In the third line we used that the driving processes are identical in
distribution until that time. In the fourth line we use the martingale
property and independence of local fields again. The final result
comes from the fact that the driving process for the three spins is
equivalent until $a$. 
\end{proof}
We can now prove the main result of this subsection:
\begin{proof}[\textbf{\emph{Proof of \prettyref{thm:three-spin-calc}}}]
 Recall that 
\[
\E s_{1}^{1}s_{1}^{2}s_{3}^{3}=\E\left\langle \E\cR(\sigma^{1},\sigma^{2},\sigma^{3})\right\rangle .
\]
The result then follow by combining \prettyref{lem:Three-spin-function-lem}
and part 3. of \prettyref{lem:obvious}.
\end{proof}

\appendices
\section{Appendix\label{sec:Appendix}}
\subsection{On the driving process and its descendants}\label{app:driving}

We record here the following basic properties of the driving process,
cavity field process, local field process, and magnetization process.
\begin{lem}
Let $U$ be a positive ultrametric subset of a separable Hilbert space
that is weakly closed and norm bounded equipped with the restriction
of the Borel sigma algebra. Let $B_{t}(\sigma)$ be the process defined
in \eqref{eq:Bromw}. We have the following:
\begin{enumerate}
\item The covariance structure is positive semi-definite.
\item There is a version of this process that is jointly measurable and
continuous in time.
\item For each $\sigma,$ $B_{t}(\sigma)$ has the law of a brownian motion
so that stochastic integration with respect to $B_{t}(\sigma)$ is
well-defined. 
\end{enumerate}
\end{lem}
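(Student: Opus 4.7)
The plan is to dispatch the three items in order. Part (1), positive semi-definiteness of the covariance, is the only substantive claim; parts (2) and (3) then follow from (1) by standard Gaussian process machinery, with joint measurability being the anticipated obstacle.

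For part (1), the key identity is the layer-cake representation
\begin{equation*}
(t_1 \wedge t_2) \wedge (\sigma^1, \sigma^2) = \int_0^{q_*} \mathbf{1}\{s \leq t_1\}\,\mathbf{1}\{s \leq t_2\}\,\mathbf{1}\{s \leq (\sigma^1,\sigma^2)\}\,ds,
\end{equation*}
valid because all three quantities lie in $[0,q_*]$; the positivity hypothesis on $U$ is essential here, as it ensures $(\sigma^1,\sigma^2) \geq 0$. Since a nonnegative integral of PSD kernels is PSD, it suffices to show that for each fixed $s \in [0,q_*]$ the integrand is PSD as a kernel on $[0,q_*] \times U$. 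The time factor $\mathbf{1}\{s\leq t_1\}\mathbf{1}\{s\leq t_2\}$ is a rank-one outer product, and by the Schur product theorem it is enough to verify that $\bigl[\mathbf{1}\{s \leq (\sigma^i,\sigma^j)\}\bigr]_{i,j}$ is PSD on any finite subcollection of $U$. Combining $\|\sigma\|^2 = q_*$ with the distance ultrametric $\|x-z\| \leq \max\{\|x-y\|,\|y-z\|\}$ yields the inner-product ultrametric $(x,z) \geq \min\{(x,y),(y,z)\}$, whence $\sigma^i \sim_s \sigma^j \iff (\sigma^i,\sigma^j) \geq s$ is an equivalence relation on the chosen finite set. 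The associated indicator matrix is, up to permutation, block-diagonal with all-ones blocks, and is manifestly PSD.

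Part (3) is then a direct calculation: for fixed $\sigma \in U$ and $s,t \in [0,q_*]$ one has $\|\sigma\|^2 = q_*$, so $\text{Cov}(B_s(\sigma), B_t(\sigma)) = (s \wedge t) \wedge q_* = s \wedge t$, identifying the continuous modification of $B_\cdot(\sigma)$ as a standard Brownian motion on $[0,q_*]$, with respect to which It\^o integration is classical. Part (2) is where I expect the main (mild) obstacle. Existence of a Gaussian process with the covariance from (1) follows from Kolmogorov's extension theorem, and time-continuity at fixed $\sigma$ from $\E|B_t(\sigma)-B_s(\sigma)|^2 = |t-s|$ by Kolmogorov's criterion. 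The delicate step is to patch these fiberwise continuous modifications into a single process that is simultaneously jointly measurable in $(t,\sigma,\omega)$. My plan is to fix a countable norm-dense subset $U_0 \subset U$, build a modification on $[0,q_*] \times U_0$ that is continuous in $t$ and measurable in $(\sigma,\omega)$ via the standard Gaussian separability argument, and extend to $U$ using the $L^2(\Omega)$-modulus $\E|B_t(\sigma)-B_t(\sigma')|^2 = 2\bigl(t - (\sigma,\sigma')\bigr)_+$. Norm-boundedness and weak closedness of $U$ together with the Borel structure specified in the hypothesis ensure that $L^2$-limits along approximating sequences from $U_0$ assemble into a jointly measurable extension.
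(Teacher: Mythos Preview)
Your proposal is correct and follows essentially the same strategy as the paper. For part (1), both you and the paper use the layer-cake identity to reduce to showing that the matrix $\bigl[\mathbf{1}\{s\leq(\sigma^i,\sigma^j)\}\bigr]_{i,j}$ is positive semidefinite for each fixed $s$; you argue this via the equivalence-relation/block-diagonal structure, whereas the paper bounds the quadratic form below by the rank-one form $\bigl[\mathbf{1}\{s\leq(\sigma^i,\sigma_*)\}\mathbf{1}\{s\leq(\sigma^j,\sigma_*)\}\bigr]_{i,j}$ for a fixed reference point $\sigma_*\in U$ (your block picture in fact explains why this comparison is valid, since the difference is the original matrix with one all-ones block zeroed out). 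For part (2), the paper argues stochastic continuity in the norm topology plus separability and then invokes a measurable-modification theorem from Gikhman--Skorokhod, while you outline the equivalent hands-on construction via a countable norm-dense subset and $L^2$-extension; both are standard, and the paper's citation is slightly cleaner while your sketch makes the mechanism explicit. Part (3) is identical in both.
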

\begin{proof}
We begin with the first. To see this, simply observe that if $\alpha_{i}\in\R$,
$(t_{i},\sigma_{i})$ are finitely many points in $[0,q_{*}]\times U$
and $\sigma_{*}\in U$ , then 
\begin{align*}
\sum\alpha_{i}\alpha_{j}\left(t_{i}\wedge t_{j}\wedge(\sigma_{i},\sigma_{j})\right) & =\sum\alpha_{i}\alpha_{j}\int\indicator{s\leq t_{i}}\indicator{s\leq t_{j}}\indicator{s\leq(\sigma_{i},\sigma_{j})}ds\\
 & \geq\sum\alpha_{i}\alpha_{j}\int\indicator{s\leq t_{i}}\indicator{s\leq t_{j}}\indicator{s\leq(\sigma_{i},\sigma_{*})}\indicator{s\leq(\sigma_{j},\sigma_{*})}ds\\
 & =\norm{\sum\alpha_{i}\indicator{s\leq t_{i}\wedge(\sigma_{i},\sigma_{*})}}_{L^{2}}\geq0.
\end{align*}
We now turn to the second. Observe first that, since $[0,q_{*}]\times U$
is separable and $\R$ is locally compact, $B_{t}(\sigma)$ has a
separable version. Furthermore, observe that $B_{t}(\sigma)$ is stochastically
continuous in norm, that is as $(t,\sigma)\to(t_{0},\sigma_{0})$
in the norm topology, $P(\abs{B_{t}(\sigma)-B_{t_{0}}(\sigma_{0})}>\epsilon)\to0$.
Thus since $U$ is weakly-closed and norm bounded it is compact in
the weak topology. Thus it has a version that is jointly measurable
by \cite[Theorem IV.4.1]{GikhmanSkorokhodVol1}. Note then, since
the covariance of $B_{t}(\sigma)$ for fixed $\sigma$ is that of
Brownian motion and $B_{t}(\sigma)$ is separable, it is in fact continuous
by \cite[Theorem IV.5.2]{GikhmanSkorokhodVol1}.

The third property was implicit in the proof of the second.
\end{proof}
We now observe the following consequence of the above proposition:
\begin{cor}
Let $U$ be a positive ultrametric subset of a separable Hilbert space
that is weakly closed and norm bounded. Then the cavity field process,
$Y_{t}(\sigma)$, the local field process, $X_{t}(\sigma)$, and the magnetization process, $M_{t}(\sigma)$, exist, are continuous
in time and Borel measurable in $\sigma.$
\end{cor}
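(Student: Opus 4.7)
The plan is to bootstrap each of the three processes, in order, from the joint measurability and time continuity of the driving process $B_t(\sigma)$ supplied by the preceding lemma. Throughout, $U$ is equipped with the restriction of the Borel $\sigma$-algebra (equivalently, the weak Borel structure, since $U$ is weakly compact and separable), and $\xi''$ is continuous and bounded on $[0,q_*]$.

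First I would construct $Y_t(\sigma)$. Since $\sqrt{\xi''(\cdot)}$ is deterministic, bounded, and continuous, the cavity field SDE reduces to a Wiener integral
\[
Y_t(\sigma)=h+\int_0^t\sqrt{\xi''(s)}\,dB_s(\sigma),
\]
so for each fixed $\sigma$ existence is immediate and $Y_t(\sigma)$ is a continuous Gaussian martingale (a time change of Brownian motion). To upgrade this to joint measurability in $(\omega,\sigma)$, I would approximate the integral by the Riemann sums
\[
Y_t^n(\sigma)=h+\sum_{k}\sqrt{\xi''(t_k^n)}\bigl(B_{t_{k+1}^n\wedge t}(\sigma)-B_{t_k^n\wedge t}(\sigma)\bigr),
\]
each of which is jointly measurable by the driving process lemma, and take the $L^2$ (or a.s.) limit; pointwise limits of jointly measurable processes are jointly measurable.

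Next I would construct $X_t(\sigma)$ by solving \eqref{eq:local-field-process}. The key analytic input is that $u_x(t,x)$ is uniformly Lipschitz in $x$ (uniformly in $t\in[0,1]$) and continuous on $[0,1]\times\R$; this is standard for the Parisi PDE \cite{JagTobSC16,AuffChenPM15}, and together with boundedness of $\xi''\zeta([0,t])$ gives a uniformly Lipschitz drift. For each fixed $\sigma$, pathwise existence and uniqueness follows from classical SDE theory applied to the driving martingale $Y_\cdot(\sigma)$. For joint measurability, I would set up the Picard iteration
\[
X_t^{(k+1)}(\sigma)=h+\int_0^t\xi''(s)\zeta([0,s])u_x(s,X_s^{(k)}(\sigma))\,ds+Y_t(\sigma)-h,
\]
starting from $X^{(0)}\equiv h$; each iterate is jointly measurable (composition of the measurable $X^{(k)}$ with the continuous $u_x$, followed by Lebesgue integration and addition of $Y$), and the Gronwall-type contraction estimate gives uniform a.s. convergence on $[0,q_*]$, so the limit $X_t(\sigma)$ inherits joint measurability and pathwise continuity.

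Finally, $M_t(\sigma)=u_x(t,X_t(\sigma))$ is the composition of a jointly continuous deterministic function with the process $X$, so continuity in $t$ and joint measurability in $(\omega,\sigma)$ are immediate. The main technical obstacle is the joint measurability claim for $Y$ and $X$: the SDE solutions are a priori defined only up to $\sigma$-dependent null sets, so one cannot simply invoke pointwise existence. The Riemann-sum approximation for $Y$ and the Picard iteration for $X$ are what make the measurability transfer from the driving process lemma to its descendants; everything else is bookkeeping.
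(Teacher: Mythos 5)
Your proposal is correct and follows the same route the paper intends: the corollary is stated as an immediate consequence of the lemma on the driving process, with $Y$ obtained by stochastic integration against $B$, $X$ by solving the Lipschitz-drift SDE driven by $Y$, and $M$ as a continuous function of $X$. Your Riemann-sum and Picard-iteration constructions simply make explicit the transfer of joint measurability that the paper leaves implicit.
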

In the above, the following observation regarding the infinitesimal
generator of the above processes will be of interest.
\begin{lem}
Let $(\sigma^{i})_{i=1}^{n}\subset U$ where $U$ is as above. Then
we have the following.
\begin{enumerate}
\item The driving process satisfies the bracket relation 
\[
\left\langle B(\sigma^{1}),B(\sigma^{2})\right\rangle _{t}=\begin{cases}
t & t\leq(\sigma,\sigma')\\
0 & t>(\sigma,\sigma')
\end{cases}.
\]
\item The cavity field process satisfies the bracket relation 
\[
\left\langle Y(\sigma^{1}),Y(\sigma^{2})\right\rangle _{t}=\begin{cases}
\xi'(t) & t\leq(\sigma^{1},\sigma^{2})\\
0 & else
\end{cases}.
\]
\item The local fields process satisfies the bracket relation 
\[
\left\langle X(\sigma^{1}),X(\sigma^{2})\right\rangle _{t}=\begin{cases}
\xi'(t) & t\leq(\sigma^{1},\sigma^{2})\\
0 & else
\end{cases}
\]
and has infinitesimal generator 
\begin{equation}
L_{t}^{lf}=\frac{\xi''(t)}{2}\left(\sum a_{ij}(t)\partial_{i}\partial_{j}+2\sum b_{i}(t,x)\partial_{i}\right)\label{eq:inf-gen-loc-field}
\end{equation}
where $a_{ij}(t)=\indicator{t\leq(\sigma^{i},\sigma^{j})}$ and $b_{i}(t,x)=\zeta([0,t])\cdot u_{x}(t,x).$
\end{enumerate}
\end{lem}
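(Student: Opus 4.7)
The plan is to read each bracket identity off from the covariance \eqref{eq:Bromw}, and then to extract the infinitesimal generator directly from the SDE \eqref{eq:local-field-process}. For the first claim, I would fix $\sigma^{1},\sigma^{2}\in U$ and set $q=(\sigma^{1},\sigma^{2})$. By the preceding lemma, each $B_{t}(\sigma^{i})$ is a continuous Gaussian martingale (in fact a Brownian motion), so the joint law of $(B_{t}(\sigma^{1}),B_{t}(\sigma^{2}))_{t\geq0}$ is Gaussian and completely determined by \eqref{eq:Bromw}. The identity $\E|B_{t}(\sigma^{1})-B_{t}(\sigma^{2})|^{2}=2t-2(t\wedge q)$ vanishes on $[0,q]$, so the two paths agree almost surely there; for $t\geq s\geq q$, the Gaussian increments $B_{t}(\sigma^{1})-B_{s}(\sigma^{1})$ and $B_{t}(\sigma^{2})-B_{s}(\sigma^{2})$ have cross-covariance $t\wedge q-s\wedge q=0$, so they are independent (and independent of the past). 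This is enough to identify $B_{t}(\sigma^{1})B_{t}(\sigma^{2})-t\wedge q$ as a martingale, yielding $\langle B(\sigma^{1}),B(\sigma^{2})\rangle_{t}=t\wedge q$, which is the asserted formula read as a density with respect to Lebesgue measure.

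For the second claim, I would use that $Y_{t}(\sigma)=h+\int_{0}^{t}\sqrt{\xi''(s)}\,dB_{s}(\sigma)$, so by the Kunita--Watanabe identity for stochastic integrals,
\[
\langle Y(\sigma^{1}),Y(\sigma^{2})\rangle_{t}=\int_{0}^{t}\sqrt{\xi''(s)}\cdot\sqrt{\xi''(s)}\,d\langle B(\sigma^{1}),B(\sigma^{2})\rangle_{s}=\int_{0}^{t\wedge q}\xi''(s)\,ds=\xi'(t\wedge q),
\]
using step one together with $\xi'(0)=0$.

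For the third claim, \eqref{eq:local-field-process} writes $X_{t}(\sigma)$ as $Y_{t}(\sigma)$ plus a continuous drift, which is of finite variation in $t$ and hence contributes nothing to the quadratic covariation. Therefore $\langle X(\sigma^{1}),X(\sigma^{2})\rangle_{t}=\langle Y(\sigma^{1}),Y(\sigma^{2})\rangle_{t}$, which gives the stated bracket formula for $X$. For the infinitesimal generator of $(X_{t}(\sigma^{i}))_{i=1}^{n}$, I would apply It\^o's formula to a smooth test function $F$: the drift in \eqref{eq:local-field-process} contributes the first-order part $\xi''(t)\zeta([0,t])u_{x}(t,x_{i})\partial_{i}F$, while the covariation just computed furnishes the second-order coefficient matrix $d\langle X(\sigma^{i}),X(\sigma^{j})\rangle_{t}/dt=\xi''(t)\indicator{t\leq(\sigma^{i},\sigma^{j})}$, reproducing \eqref{eq:inf-gen-loc-field}. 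The only delicate point throughout is checking that the covariance formula \eqref{eq:Bromw} correctly encodes both path coincidence on $[0,(\sigma^{1},\sigma^{2})]$ and independence of increments thereafter; once those two structural facts are in hand, the rest is routine stochastic calculus and no genuine obstacle remains.
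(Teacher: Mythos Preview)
Your proposal is correct and follows essentially the same route as the paper: both arguments extract path coincidence on $[0,(\sigma^{1},\sigma^{2})]$ and independence of increments thereafter from the covariance \eqref{eq:Bromw}, deduce the brackets for $Y$ and $X$ by elementary It\^o calculus (the paper just says ``elementary properties of It\^o processes'' where you invoke Kunita--Watanabe and the finite-variation drift explicitly), and then read off the generator by applying It\^o's formula to a test function. Your write-up is in fact slightly more careful than the paper's, e.g.\ in verifying the $L^{2}$-coincidence on $[0,q]$ and in noting that the lemma's bracket formulas are to be read as densities.
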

\begin{proof}
We begin with the first claim. To see this, observe that by construction,
\[
B_{t}(\sigma^{1})=B_{t}(\sigma^{2})
\]
for $t\leq(\sigma^{1},\sigma^{2})$, thus the bracket above is just
the bracket for Brownian motion. If $t>(\sigma^{1},\sigma^{2}):=q$,
then the increments $B_{t}(\sigma^{1})-B_{q}(\sigma^{1})$ and $B_{t}(\sigma^{2})-B_{q}(\sigma^{2})$
are independent Brownian motions. This yields the second regime. By
elementary properties of It\^o processes, we obtain the brackets
for $Y_{t}$ and $X_{t}$ from this argument. It remains to obtain
the infinitesimal generator for the local fields process. 

To this end, observe that if $f=f(t,x_{1},\ldots,x_{k})$ is a test
function, then It\^o's lemma applied to the process $(X_{t}(\sigma^{i}))_{i=1}^{n}$
yields 
\begin{align*}
df & =\partial_{t}f\cdot dt+\sum_{i}\partial_{x_{i}}f\cdot dX_{t}(\sigma^{i})+\frac{1}{2}\cdot\sum\partial_{x_{i}}\partial_{x_{j}}f\cdot d\left\langle X_{t}(\sigma^{i}),X_{t}(\sigma^{j})\right\rangle \\
 & =\left(\partial_{t}f+\sum_{i}\partial_{x_{i}}f\cdot\left(\xi''(t)\zeta(t)u_{x}(t,X_{t}(\sigma^{i})\right)+\frac{\xi''}{2}\sum\indicator{t\leq(\sigma^{i},\sigma^{j})}\partial_{x_{i}}\partial_{x_{j}}f\right)dt+dMart
\end{align*}
where $dMart$ is the increment for some martingale. Taking expectations
and limits in the usual fashion then yields the result. 
\end{proof}

\subsection{The Cavity Equations and Ghirlanda-Guerra Identities}\label{app:cavity}
In this section, we recall some definitions for completeness. For a textbook presentation, see \cite[Chapters 2 and 4]{PanchSKBook}.
Let $\cM$ be the set of all
measures on the set $\{-1,1\}^{\N\times\N}$ that are exchangeable, that is,
if $(s_i^\ell)$ has law $\nu\in\cM$, then
\[
(s_{\pi(i)}^{\rho(\ell)})\eqdist (s_{i}^{\ell})
\]
for any permutations $\pi,\rho$ of the natural numbers.
The Aldous-Hoover theorem \cite{AldExch83,Hov82},
states that 
if $(s_{i}^{\ell})$ is the random variable induced by some
measure $\nu\in\cM$, then there is a measurable function of four
variables, $\sigma(w,u,v,x)$, such that  
\[
(s_{i}^{\ell})\eqdist(\sigma(w,u_{\ell},v_{i},x_{\ell i}))
\]
where $w,u_{\ell},v_{i},x_{\ell i}$ are i.i.d. uniform $[0,1]$ random
variables. We call this function a \emph{directing function} for $\nu$. 
The variables $s_{i}^{\ell}$ are called the spins sampled from $\nu$.

For any $\nu$ in $\cM$ with directing function $\sigma$,
let $\bar{\sigma}(w,u,v)=\int\sigma(w,u,v,x)dx$. Note that since $\sigma$ is $\{\pm1\}$-valued, 
this encodes all of the information of $\sigma(w,u,v,\cdot )$.
Define the measure $\mu$ \,on the Hilbert space, $\cH=L^{2}([0,1],dv)$,
by the push-forward of $du$ through the map $u\mapsto\bar{\sigma}(w,u,\cdot)$,
\[
\mu=(u\mapsto\bar{\sigma}(w,u,\cdot))_{*}du.
\]
The measure $\mu$ is called the asymptotic Gibbs measure corresponding to $\nu$.

A measure $\nu$ in $\cM$ is said to satisfy the Ghirlanda-Guerra identities if the
law of  the overlap array satisfies the following property: 
for every $f\in C([-1,1]^{n})$ and $g\in C([-1,1])$, we have
\begin{equation}
\E\left\langle f(R^{n})\cdot g(R_{1,n+1})\right\rangle =\frac{1}{n}\left[\E\left\langle f(R^{n})\right\rangle \cdot\E\left\langle g(R_{12})\right\rangle +\sum_{k=2}^{n}\E\left\langle f(R^{n})\cdot g(R_{1k})\right\rangle \right],\label{eq:GGI}
\end{equation}
where by the bracket, $\left\langle \cdot\right\rangle $, we mean
integration against the relevant products of $\mu$ with itself.

A measure $\nu$  is said to satisfy the cavity equations if the following is true. 
 Fix the directing function $\sigma$ and $\bar{\sigma}$ as above. 
Let $g_{\xi'}(\bar{\sigma})$ denote the centered Gaussian process indexed by
$L^2([0,1],dv)$ with covariance
\[
\E \bigg[g_{\xi'}\big(\bar{\sigma}(w,u,\cdot)\big)g_{\xi'}\big(\bar{\sigma}(w,u',\cdot)\big)\bigg] = \xi'\bigg(\int\bar{\sigma}(w,u,v)\bar{\sigma}(w,u',v)dv\bigg)
\]
and let $G_\xi'(\bar{\sigma})=g_{\xi'}(\bar{\sigma})+z(\xi'(1)-\xi'(\norm{\bar{\sigma}(w,u,\cdot)}_{L^2(dv)}^2))^{1/2}$. 
Let $g_{\xi',i}$ and $G_{\xi',i}$ be independent copies of these processes. Let $n,m,q,r,l\geq1$ be such that $n\leq m$ and $l\leq q$.
Let $C_l \subset [m]$ and let $C_l^1=C_l\cap[n]$ and $C^2_l=C_l\cap(n+[m])$.  Let 
\[
U_{l}=\int\E'\prod_{i\in C_{l}^{1}}\tanh G_{\xi',i}(\bar{\sigma}(w,u,\cdot)\prod_{i\in C_{l}^{2}}\bar{\sigma}_{i}\cE_{n,r}du
\]
where $\E'$ is expectation in $z$,  $\bar{\sigma}_{i}=\bar{\sigma}(w,u,v_{i}),\theta(t)=\xi'(t)t-\xi(t)$,
and where 
\[
\cE_{n,r}=\exp\left(\sum_{i\leq n}\log\cosh(G_{\xi',i}(\bar{\sigma}(w,u,\cdot))+\sum_{k\leq r}G_{\theta,k}(\bar{\sigma}(w,u,\cdot))\right).
\]
Let $V=\E'\cE_{n,r}$.  The \emph{cavity equations}  for $n,m,q,r\geq1$ are then given by
\begin{equation}
\E\prod_{l\leq q}\E'\prod_{i\in C_{l}}\bar{\sigma}_{i}=\E\frac{\prod_{l\leq q}U_{l}}{V^{q}}.\label{eq:cavity-eq}
\end{equation}

\bibliographystyle{plain}
\bibliography{localfields}
\end{document}